\newcommand{\fP}{\mathfrak{P}}
\newcommand{\cI}{\mathcal{I}}
\newcommand{\cO}{\mathcal{O}}
\newcommand{\valp}{\mbox{val}_{p}}
\newcommand{\valpi}{\mbox{val}_{\pi}}
\newcommand{\etap}{\eta_{p}}
\newtheorem{theorem}[subsection]{Theorem}
\newtheorem{definition}[subsection]{Definition}
\newtheorem{proposition}[subsection]{Proposition}
\newtheorem{corollary}[subsection]{Corollary}
\newtheorem{remark}[subsection]{Remark}
\journal{ARXIV}
\begin{document}

\begin{frontmatter}


\title{Zeta Functions of Certain Quadratic Orders}
\author{Malors Espinosa}
\ead{srolam.espinosalara@mail.utoronto.ca, Department of Mathematics, University of Toronto
Room 6290, 40 St. George Street, Toronto, Ontario, Canada M5S 2E4}


\begin{abstract}
In \cite{LanBE04} Langlands provides a formula for certain product of orbital integrals in $GL(2, \mathbb{Q})$. Its generalization has become an important question for the strategy of Beyond Endoscopy. Arthur in \cite{ARTHUR2018425} predicts this formula should coincide with a product of polynomials associated to zeta functions of orders constructed in \cite{ZYun} by Zhiwei Yun. In this paper we compute, for a certain family of orders, explicit formulas for these zeta functions by a recursive method. We use these zeta functions in \cite{malors21} to prove that Arthur's prediction is correct.  

Mathematics Subject Classification: 11S40, 11S45
\end{abstract}
\end{frontmatter}


\section{Introduction and Results of this Article}

Let $K$ be a $p-$adic field and $\cO_K$ be its ring of integers. Let $L$ be a reduced $K$-algebra of dimension $2$ over $K$ and $\cO_L$ the integral closure of $\cO_K$ in $L$. In \cite{ZYun}, Yun defines for a given order $\cO\subseteq L$ a zeta function $\zeta_{\cO}$. It generalizes the classical construction of the zeta function of $\cO_L$. In \cite{ZYun} it is also proven that $\zeta_{\cO}$ is a rational function, that is
\begin{equation*}
    \zeta_{\cO}(s) = \dfrac{P(q^{-s})}{V(s)},
\end{equation*}
where $q$ is the cardinality of the residue field of $K$. $V$ is a polynomial in $q^{-s}$ that only depends on the extension $L/K$, while $P$ is a polynomial that depends on $\cO$.

The general theory of discrete valuation rings guarantees the existence of $\Delta\in\cO_L$ such that
\begin{equation*}
    \cO_L = \cO_K[\Delta].
\end{equation*}
Using this, we define for each integer $n\ge 0$, the order 
\begin{equation*}
    \cO_n := \cO_K[p^n\Delta],
\end{equation*}
where $p$ is the uniformizer of $K$. This is an order and thus has associated a zeta function $\zeta_n$. The main result of this article is the explicit computation of the polynomial $P$ for $\zeta_n$ (i.e. the numerator of its rational expression). More concretely, our main theorem is:
\begin{theorem}(Theorem \ref{solutionrecurrence} in section \ref{solutionoftherecurrence})
For each $n\ge 0$ define the following polynomials:
\begin{align*}
        R_n(X) &= 1 + qX^2 + q^2X^4 + ... + q^nX^{2n},
\end{align*}
and for $n\ge 1$ define
\begin{align*}
    U_n(X) &= (1 + X)R_{n - 1}(X) + q^nX^{2n},\\
    S_n(X) &= (1 - X)R_{n - 1}(X) + q^nX^{2n}.
\end{align*}
Finally, also put $U_0(X) = S_0(X) = 1$. Explicitly, these polynomials are
\begin{align*}
    R_n(X) &= 1 + qX^2 + q^2X^4 + ... + q^nX^{2n},\\
    U_n(X) &= 1 + X + qX^2 + qX^3 + ... + q^{n - 1}X^{2n - 2} + q^{n - 1}X^{2n - 1} + q^nX^{2n},\\
    S_n(X) &= 1 - X + qX^2 - qX^3 + ... + q^{n - 1}X^{2n - 2} - q^{n - 1}X^{2n - 1} + q^nX^{2n}.
\end{align*}Then the solution of the ramified, unramified and split case recurrence, respectively, satisfy
\begin{align*}
    (1 - q^{-s})\zeta_n(s) &= R_n(q^{-s}),\\
    (1 - q^{-2s})\zeta_n(s) &= U_n(q^{-s}),\\
    (1 - q^{-s})^2\zeta_n(s) &= S_n(q^{-s}).
\end{align*}
\end{theorem}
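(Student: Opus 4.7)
The plan is induction on $n$, run separately for each of the three cases (ramified, unramified, split), using as a black box the three recurrences for $\zeta_n$ that are derived in the earlier sections of the paper (the very recurrences the theorem claims to solve). Before running the induction I would check that the claimed closed-form expansions for $R_n$, $U_n$, $S_n$ actually follow from the recursive definitions, and that the base case $n=0$ matches the classical zeta function of $\cO_L$.

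The closed-form expansions are an elementary polynomial computation: writing $R_{n-1}(X) = \sum_{k=0}^{n-1} q^k X^{2k}$ and expanding $(1+X)R_{n-1}(X)$ by linearity gives $\sum_{k=0}^{n-1} q^k X^{2k} + \sum_{k=0}^{n-1} q^k X^{2k+1}$, to which adding $q^n X^{2n}$ yields exactly the expansion claimed for $U_n$. The same computation with $(1-X)$ produces $S_n$ with the alternating signs on the odd-degree monomials. For the base case, $\cO_0 = \cO_K[\Delta] = \cO_L$ is the maximal order, so $\zeta_0$ is the classical zeta function: in the ramified case $\zeta_0(s) = (1-q^{-s})^{-1}$; in the unramified case the residue field has size $q^2$, so $\zeta_0(s) = (1-q^{-2s})^{-1}$; and in the split case $\cO_L \cong \cO_K \times \cO_K$ gives $\zeta_0(s) = (1-q^{-s})^{-2}$. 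In all three cases multiplying through yields $R_0 = U_0 = S_0 = 1$.

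For the inductive step I would substitute the recursive definitions of $R_n$, $U_n$, $S_n$ into the asserted identity at level $n$, then express $\zeta_n$ via the recurrence supplied earlier in the paper, and finally use the inductive hypothesis on $\zeta_{n-1}$ (via $R_{n-1}$) to reduce everything to a termwise polynomial identity. The striking structural feature that both $U_n$ and $S_n$ are built from $R_{n-1}$ rather than from $U_{n-1}$ or $S_{n-1}$ means that the three inductions are not independent: in the unramified and split cases one needs the ramified identity at the previous level, which suggests running the ramified induction first and then feeding its conclusion into the other two.

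The main obstacle will be the polynomial bookkeeping in the inductive step, in particular tracking how the boundary contribution $q^n X^{2n}$ at each level matches the new term introduced when passing from $\cO_{n-1}$ to $\cO_n$. In the unramified and split cases one must additionally verify that the factor $(1 \pm X)$ materialises correctly from the local structure of the recurrence at the boundary; this is where a sign error or off-by-one is most likely, so a careful sanity check at small $n$ (say $n=1$ and $n=2$) should be carried out first to pin down the signs and the precise form of the coupling before running the general induction.
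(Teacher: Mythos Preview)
Your approach is exactly the paper's: induction on $n$ using the recurrence $\zeta_n(s)=\zeta_n^P(s)+q^{-s}\zeta_{n-1}(s)$, the explicit form of $\zeta_n^P$, and the initial condition $\zeta_0=V^{-1}$. The paper's own proof is a single sentence declaring the induction straightforward and omitting the details.

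One point to correct in your plan: the three inductions are \emph{not} coupled. The three cases refer to three different algebras $L$, hence three unrelated families $\{\zeta_n\}$; the symbol $R_{n-1}$ appearing in the definitions of $U_n$ and $S_n$ is merely a convenient polynomial abbreviation, not the ramified zeta numerator. In, say, the unramified case the inductive hypothesis hands you $(1-q^{-2s})\zeta_{n-1}(s)=U_{n-1}(q^{-s})$, and the step reduces to verifying $(1-q^{-2s})\zeta_n^P(s)+q^{-s}U_{n-1}(q^{-s})=U_n(q^{-s})$ once $\zeta_n^P$ is written out via Propositions~\ref{recurrenceshape} and~\ref{valuesindices}; the ramified identity is never invoked. (Relatedly, there is only \emph{one} recurrence, not three---what varies across the cases is the unit-index data feeding into $\zeta_n^P$ and the factor $V(s)$.) This misconception is harmless in that you would discover it immediately upon execution, but it is worth dispelling before you write.
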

Notice that the assumption that $L$ has dimension $2$ over $K$ implies that $L$ is one of three possible options: a ramified field extension of $K$, an unramified field extension of $K$ or $L = K\times K$. 

These polynomials were already computed in \cite{Kaneko2003} (see the proof of Theorem 1 there). In that paper these polynomials were computed by factoring into Euler products a global formula deduced by Zagier in \cite{Zagier} (see proposition 3(iii), page 130 there) and more generally in the recent paper \cite{KMizuno} (see their Theorem 1). In particular, it relies entirely on global methods. On the contrary, we compute these polynomials by a recursive method that is entirely local in nature and that does not require any global formula beforehand. Concretely, the result is the following 
\begin{theorem}(Theorem \ref{recurrencestatement} in section \ref{studyoftheprincipalpart})
For $n\ge 1$, the zeta functions of the orders $\cO_n$ satisfy the recurrence relation
\begin{equation*}
    \zeta_n(s) = \zeta_n^P(s) + q^{-s}\zeta_{n-1}(s).
\end{equation*}
\end{theorem}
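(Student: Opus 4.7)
The strategy is to prove the recurrence by partitioning the index set of the sum defining $\zeta_n(s)$ into two pieces: one that is $\zeta_n^P(s)$ by definition, and a complementary piece that we can identify, up to the factor $q^{-s}$, with the sum defining $\zeta_{n-1}(s)$. The key structural input is the chain of orders
\begin{equation*}
    \cO_n \subset \cO_{n-1} \subset \cdots \subset \cO_0 = \cO_L,
\end{equation*}
in which each successive quotient has size $q$, together with the fact that every $\cO_n$-invariant lattice has its stabilizer in $L$ equal to some $\cO_m$ with $m \leq n$.

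First I would recall the explicit form of $\zeta_n$ from Yun's construction as a generating function weighted by the $\cO_K$-index of $\cO_n$-invariant sublattices of $\cO_L$. By the definition given in the preceding section, $\zeta_n^P(s)$ collects exactly those lattices $M$ whose stabilizer is $\cO_n$ itself, i.e.\ the ones that are \emph{not} closed under multiplication by $p^{n-1}\Delta$. The remaining contributions come from lattices whose stabilizer strictly contains $\cO_n$; since $\cO_{n-1}$ is the unique immediate successor of $\cO_n$ in the chain above, such lattices are automatically $\cO_{n-1}$-invariant, and thus index the sum defining $\zeta_{n-1}(s)$.

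Next I would set up an explicit bijection between the non-principal lattices for $\zeta_n$ and the lattices indexing $\zeta_{n-1}$, tracking how the weighting $[\cO_L : M]^{-s}$ transforms. The shift arises from a change of normalization forced by the inclusion $\cO_n \hookrightarrow \cO_{n-1}$ of index $q$: either a rescaling $M \mapsto pM$ (or an equivalent operation adapted to the three cases) maps $\cO_{n-1}$-lattices bijectively onto the non-principal $\cO_n$-lattices while multiplying the index by $q$, yielding the $q^{-s}$ weight. I would carry out this verification separately in the ramified, unramified, and split cases, using the corresponding description of $\cO_L$ as $\cO_K[\Delta]$ in each situation.

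The main obstacle will be pinning down the bijection and the index change precisely enough that the weight is $q^{-s}$ rather than $q^{-2s}$ or some other correction, since in rank two there are several natural rescalings that scale the index by different powers of $q$. A careful choice of ``canonical representative'' for each lattice class, guided by the explicit description of $p^n\Delta$, should resolve this; matching against the closed-form polynomials of Theorem \ref{solutionrecurrence} in each of the three cases provides a strong cross-check that the recurrence has the stated shape and confirms the correct power of $q$.
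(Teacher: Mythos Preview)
Your approach is essentially the paper's: split $\zeta_n$ into the principal contribution and a remainder, then identify the remainder with $q^{-s}\zeta_{n-1}$ via the map $J\mapsto pJ$ from $\cI_{n-1}$ onto the non-principal ideals in $\cI_n$ (the traveling map, Theorem~\ref{travelingmap}). A few points to tighten. First, $\zeta_n^P$ is \emph{defined} as the sum over principal ideals of $\cO_n$; your characterization ``stabilizer equal to $\cO_n$'' is correct but is a \emph{consequence} of the structural result $I=x\cO_i$ (Proposition~\ref{reprentatives}), not the definition, so you should either cite that or prove the equivalence. Second, the weight in $\zeta_n$ is $[\cO_n:I]^{-s}$, not $[\cO_L:M]^{-s}$; with the correct normalization the index shift is a one-line tower computation,
\[
[\cO_n:pJ]=\frac{[\cO_{n-1}:J]\,[J:pJ]}{[\cO_{n-1}:\cO_n]}=\frac{q^2}{q}\,[\cO_{n-1}:J]=q\,[\cO_{n-1}:J],
\]
valid uniformly in the ramified, unramified, and split cases, so no case-by-case verification is needed at this stage (the case distinction enters only later, in evaluating $\zeta_n^P$ and the initial condition). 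Third, appealing to the closed forms of Theorem~\ref{solutionrecurrence} as a cross-check would be circular, since those polynomials are derived \emph{from} this recurrence.
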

In this theorem, $\zeta_n^P(s)$ is the contribution to the zeta function $\zeta_n$ from the principal ideals of $\cO_n$. Explicitly it is
\begin{equation*}
    \zeta_n^P(s) = \displaystyle\sum_{I\subseteq\cO_n}\dfrac{1}{[\cO_n:I]^s},
\end{equation*}
where the sum runs over the principal ideals of finite index in $\cO_n$. 

The recurrence relation is the same regardless of the type of extension $L/K$ is. This obfuscates an important point: when $L = K\times K$, we have that $\cO_L = \cO_K\times\cO_K$ is not a discrete valuation ring nor a local ring nor an integral domain. We will be able to overcome this in every occasion, but a significant part of the work done in this paper is needed precisely to deal with this case. 

The recurrence relation follows from a dichotomy that ideals $I\subseteq\cO_n$ of $\cO_K-$rank 2 satisfy: \textit{they are principal ideals of $\cO_n$ or they are the image of an ideal of $\cO_{n - 1}$ under multiplication by $p$}. The recursive relation is the algebraic restatement of this fact. 

To prove this dichotomy we introduce the concept of representative of an ideal $I$. It is an element which has minimal possible valuation, in each coordinate simultaneously, among the elements of $I$. The existence of such elements is trivial when $L/K$ is a field extension, but requires care when $L = K\times K$. Once we know they exist, we can prove the following result, which implies the dichotomy:
\begin{proposition}(Proposition \ref{reprentatives} in section \ref{sectionarithmetic})
Let $n\ge 0$. For every rank $2$-ideal $I\subseteq \cO_n$ and every representative $x\in I$, there exists $0\le i \le n$ such that
\begin{equation*}
    I = x\cO_i.
\end{equation*}
Furthermore, $i$ only depends on $I$. 
\end{proposition}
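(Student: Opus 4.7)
The plan is to reformulate the statement as a classification of $\cO_n$-submodules of $\cO_L$ that contain $\cO_n$. Given a representative $x \in I$, I would set
\[
J := x^{-1}I = \{\alpha \in L : x\alpha \in I\},
\]
so that $I = xJ$ tautologically. For this to make sense, $x$ must be invertible in $L$; in the split case $L = K \times K$, the fact that $I$ has $\cO_K$-rank two combined with the representative property forces both components of $x$ to be nonzero, which is what makes division by $x$ well defined.

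The next step is to verify three structural properties of $J$. First, $J \subseteq \cO_L$: for any $y \in I$, the representative property gives $\val(y) \ge \val(x)$ coordinatewise, so $y/x$ is integral. Second, $\cO_n \subseteq J$: because $x \in I$ and $I$ is an ideal of $\cO_n$. Third, $J$ is an $\cO_n$-submodule of $\cO_L$: if $\alpha \in J$ and $\gamma \in \cO_n$, then $x(\alpha\gamma) = (x\alpha)\gamma \in I$ since $I$ is $\cO_n$-stable.

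The core of the argument is then the classification of $\cO_K$-submodules of $\cO_L$ that contain $\cO_n$. Since $\cO_L = \cO_K \oplus \Delta\cO_K$ and $\cO_n = \cO_K \oplus p^n\Delta\cO_K$ as rank-two free $\cO_K$-modules, the quotient $\cO_L/\cO_n$ is isomorphic to $\cO_K/p^n\cO_K$, whose submodules are exactly $p^i\cO_K/p^n\cO_K$ for $0 \le i \le n$. These pull back to the chain $\cO_0 \supsetneq \cO_1 \supsetneq \cdots \supsetneq \cO_n$ with $\cO_i = \cO_K + p^i\Delta\cO_K$. Since $\cO_K \subseteq \cO_n$, every $\cO_n$-submodule of $\cO_L$ is in particular an $\cO_K$-submodule, so $J$ must coincide with one of the $\cO_i$, and therefore $I = x\cO_i$. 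For the last assertion, any two representatives $x, x'$ of $I$ have equal valuations in each coordinate, so they differ by a unit of $\cO_L$; hence $x\cO_L = x'\cO_L$, and the index $[x\cO_L : I] = [\cO_L : \cO_i] = q^i$ depends only on $I$.

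The main delicate point is the split case where $\cO_L = \cO_K \times \cO_K$ is not a DVR: the whole argument depends on representatives existing and having nonzero components in both coordinates, an issue that must be handled in the preceding part of Section~\ref{sectionarithmetic}. Once this is secured, the module-theoretic mechanism above is uniform across the ramified, unramified, and split settings, since the description of $\cO_i$ as a free $\cO_K$-module of rank two with basis $\{1, p^i\Delta\}$ holds in all three cases (with an appropriate choice of $\Delta$, e.g.\ $\Delta = (1,0)$ when $L = K \times K$).
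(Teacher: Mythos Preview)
Your argument is correct and follows essentially the same approach as the paper: divide $I$ by a representative to obtain an $\cO_n$-module sandwiched between $\cO_n$ and $\cO_L$, then invoke the classification of such intermediate modules (the paper cites its Proposition~\ref{onmodulesunsplit}, while you reprove it via the cyclic quotient $\cO_L/\cO_n \cong \cO_K/p^n\cO_K$). Your uniqueness argument via the index $[x\cO_L:I]=q^i$ is a slight variant of the paper's unit argument, but both reach the same conclusion.
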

This proposition follows from standard arguments when $L/K$ is a field extension due to the existence of a valuation in $\cO_L$. We introduce the notion of representative to adapt those argument when $L = K\times K$. 

In order to solve the recurrence we need the initial condition $\zeta_0$ and $\zeta_n^P(s)$ explicitly. The initial condition is straightforward because it is the zeta function of $\cO_L$. For $\zeta_n^P$ we prove:
\begin{proposition}(Proposition \ref{recurrenceshape} in section \ref{studyoftheprincipalpart})
The principal part of the zeta functions of the orders $\cO_n$ satisfies
\begin{equation*}
        \zeta_n^P(s) = \displaystyle\sum_{d = 0}^{n - 1}\dfrac{[\cO_{0}^*:\cO_n^*]}{[\cO_{0}^*:\cO_{n - d}^*]}\cdot\dfrac{1}{q^{2ds}} + \dfrac{[\cO_{0}^*:\cO_n^*]}{q^{2ns}V(s)}.
\end{equation*}
Recalll that $V(s)$ is the denominator of the rational expression of $\zeta_n(s)$. It is an explicit polynomial that depends only on $L/K$. 
\end{proposition}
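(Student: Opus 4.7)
The computation starts from the multiplicativity identity $[\cO_n : x\cO_n] = [\cO_0 : x\cO_0]$, obtained by applying $[\cO_0 : \cdot]$ to the towers $x\cO_n \subseteq \cO_n \subseteq \cO_0$ and $x\cO_n \subseteq x\cO_0 \subseteq \cO_0$. Since a principal ideal of finite index in $\cO_n$ is exactly $x\cO_n$ for a non-zero-divisor $x \in \cO_n$, and two generators give the same ideal iff they differ by an element of $\cO_n^*$, we have
\begin{equation*}
\zeta_n^P(s) \;=\; \sum_{x \,\in\, \cO_n^* \backslash (\cO_n \cap L^\times)} [\cO_0 : x\cO_0]^{-s}.
\end{equation*}
I would then stratify these cosets by the \emph{depth} $d(x) := \max\{d \geq 0 : x \in p^d\cO_L\}$, into the $n$ finite strata $d(x) = 0, 1, \ldots, n-1$ and a tail stratum $d(x) \geq n$.

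The structural claim driving everything is: if $d(x) = d < n$, then $x = p^d z$ for a unique $z \in \cO_{n-d}^*$; if $d(x) \geq n$, then $x = p^n x'$ for a unique $x' \in \cO_L \cap L^\times$ (with $p^n \cO_L \subseteq \cO_n$ ensuring all such $x'$ actually occur). In the field cases this is immediate from the valuation on $\cO_L$; in the split case $L = K \times K$ it is a short case analysis matching the two coordinate valuations against the congruence $u_1 \equiv u_2 \pmod{p^n}$ defining $\cO_n = \cO_K + \cO_K p^n \Delta$. Granted this, the contributions assemble cleanly. In stratum $d < n$, two elements $z, z' \in \cO_{n-d}^*$ yield the same $\cO_n$-ideal iff $z/z' \in \cO_n^*$, so the stratum contributes $[\cO_{n-d}^* : \cO_n^*] = [\cO_0^*:\cO_n^*] / [\cO_0^*:\cO_{n-d}^*]$ principal ideals, each of index $[\cO_0 : p^d\cO_0] = q^{2d}$ and hence weight $q^{-2ds}$.

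In the tail stratum the ideals are parameterized by $(\cO_L \cap L^\times)/\cO_n^*$. Passing to the coarser quotient by $\cO_0^*$, each $\cO_0^*$-orbit splits into $[\cO_0^*:\cO_n^*]$ cosets modulo $\cO_n^*$ by freeness of the multiplicative action, so the tail sum equals $[\cO_0^*:\cO_n^*] \cdot q^{-2ns} \cdot \zeta_{\cO_L}(s)$; since every ideal of $\cO_L$ is principal in all three cases, $\zeta_{\cO_L}(s) = 1/V(s)$. Adding the strata yields the claimed formula. The main obstacle is the structural claim in the split case: when $\cO_L = \cO_K \times \cO_K$, the valuation-based factorization of $x$ must be replaced by coordinate-wise bookkeeping compatible with the representative framework of Proposition \ref{reprentatives}, and this is where the non-DVR nature of $\cO_L$ forces the most care.
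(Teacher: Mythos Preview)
Your argument is correct and follows essentially the same route as the paper: both split $\zeta_n^P$ into a finite ``low'' part and a ``high'' tail according to how divisible the generator is by $p$ in $\cO_L$, identify each low stratum with $\cO_{n-d}^*/\cO_n^*$, and recognize the tail as $[\cO_0^*:\cO_n^*]\,q^{-2ns}\,\zeta_{\cO_L}(s)=[\cO_0^*:\cO_n^*]\,q^{-2ns}/V(s)$.

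The organizational difference is that the paper stratifies by the finer invariant $\varepsilon(x)$ (the full valuation profile), and then has to prove a separate lemma showing that for low ideals only the ``diagonal'' types $de$ (nonsplit) or $(d,d)$ (split) actually occur; your depth $d(x)=\max\{d:x\in p^d\cO_L\}$ is precisely the coarser invariant that survives to the final formula, so your structural claim $x=p^d z$ with $z\in\cO_{n-d}^*$ absorbs that lemma and the paper's equivariant maps $\Psi_\omega$ into a single step. The content is the same --- in both cases the crux is that for $x=a+bp^n\Delta\in\cO_n$ with $\valp(a)<n$, the depth is forced to equal $\valp(a)$ --- but your packaging is a bit more economical, at the cost of leaving the split-case verification of the structural claim as a remark rather than a displayed computation.
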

The proof of this proposition follows from understanding how different principal ideals contribute to $\zeta_n^P(s)$. We divide principal ideals of $\cO_n$ into two classes, according to the properties of their representatives (vaguely, we divide them according to whether the representative is large or small in a specific sense). Each of the classes is responsible for one of the two terms in the expression of $\zeta_n^P(s)$. 

After the expansion of $\zeta_n^p(s)$ is settled, we prove
\begin{proposition}(Proposition \ref{valuesindices} in section \ref{solutionoftherecurrence})
The indices of the units subgroups satisfy for $n\ge 1$,
\begin{equation*}
    [\cO_0^*:\cO_n^*] = \left\{
    \begin{array}{ll}
        q^n & \mbox{in the ramified case,}\\
        (q+1)q^{n-1} & \mbox{in the unramified case,}\\
        (q-1)q^{n-1} & \mbox{in the split case.}\\
    \end{array}
    \right.
\end{equation*}
And, of course, $[\cO_0^*:\cO_0^*] = 1$.
\end{proposition}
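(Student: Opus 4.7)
The starting observation is the identity
\begin{equation*}
    \cO_n = \cO_K + p^n\cO_0,
\end{equation*}
which follows immediately from expanding a general element of $\cO_K[p^n\Delta]$ on the $\cO_K$-basis $\{1,\Delta\}$ of $\cO_0$. In the split case the same identity is obtained by taking $\Delta = (0,1) \in \cO_K\times\cO_K$, which makes $\cO_n = \{(x,y)\in\cO_K\times\cO_K : x\equiv y \pmod{p^n}\}$. In all three cases this exhibits $p^n\cO_0$ as the conductor of $\cO_n$ into $\cO_0$, and yields a canonical isomorphism $\cO_K/p^n\cO_K \xrightarrow{\sim} \cO_n/p^n\cO_0$.

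Next I would relate the unit indices to finite unit groups using the subgroup $H = 1 + p^n\cO_0$, which is contained both in $\cO_0^*$ and in $\cO_n^*$. I would apply the two short exact sequences
\begin{equation*}
    1 \to H \to \cO_0^* \to (\cO_0/p^n\cO_0)^* \to 1, \qquad 1 \to H \to \cO_n^* \to (\cO_n/p^n\cO_0)^* \to 1.
\end{equation*}
Both are exact on the right because the relevant rings are local for $n\ge 1$, so lifts of units are units. The only nonobvious point here is that $\cO_n$ is local in the split case: once $x\equiv y \pmod{p}$, the two coordinates fall into the maximal ideal of $\cO_K\times\cO_K$ simultaneously, so the two residual primes of $\cO_0$ collapse to a single prime of $\cO_n$ with residue field $\cO_K/p\cO_K$. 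Dividing the two sequences gives the clean identity
\begin{equation*}
    [\cO_0^* : \cO_n^*] \;=\; \frac{|(\cO_0/p^n\cO_0)^*|}{|(\cO_K/p^n\cO_K)^*|},
\end{equation*}
where the denominator equals $(q-1)q^{n-1}$.

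It then remains only to evaluate the numerator in each case using the standard structure of $\cO_0$. In the ramified case $\cO_0$ is a DVR with uniformizer $\pi$ and $e=2$, so $p^n\cO_0 = \pi^{2n}\cO_0$ and the numerator is $(q-1)q^{2n-1}$. In the unramified case $p$ is still a uniformizer of $\cO_0$ but the residue field has size $q^2$, giving $(q^2-1)q^{2n-2}$. In the split case $\cO_0/p^n\cO_0 = (\cO_K/p^n)\times(\cO_K/p^n)$, whose unit group has size $(q-1)^2 q^{2n-2}$. Dividing by $(q-1)q^{n-1}$ gives $q^n$, $(q+1)q^{n-1}$, and $(q-1)q^{n-1}$ respectively.

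The main obstacle is simply the split case: one must pin down the meaning of $\Delta$, note that $\cO_0$ is not a domain, and verify that $\cO_n$ is nevertheless local so the unit-lifting step still applies. Once these points are granted, the calculation is uniform across the three cases and reduces to the well-known cardinalities of $(\cO_L/\mathfrak{m}^k)^*$ for a DVR and of a product of such groups.
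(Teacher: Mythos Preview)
Your argument is correct and follows essentially the same route as the paper: both reduce to the identity
\[
[\cO_0^*:\cO_n^*]=\dfrac{|(\cO_0/p^n\cO_0)^*|}{|(\cO_n/p^n\cO_0)^*|}
\]
together with the isomorphism $\cO_n/p^n\cO_0\cong\cO_K/p^n\cO_K$, and then evaluate the two cardinalities. The paper quotes this identity from the classical theory of orders and computes the numerator via the higher-unit filtration $U_L^{(m)}$; you instead derive the identity directly from the two short exact sequences with common kernel $1+p^n\cO_0$ and then plug in the standard sizes of $(\cO_L/\mathfrak m^k)^*$.

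One small imprecision: you justify right-exactness by saying ``the relevant rings are local for $n\ge 1$,'' but in the split case $\cO_0=\cO_K\times\cO_K$ is only semilocal. The surjectivity of $\cO_0^*\to(\cO_0/p^n\cO_0)^*$ still holds trivially (lift each coordinate separately), so this does not affect your conclusion; just phrase that step accordingly.
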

The fact that the indices and the initial condition $\zeta_0(s)$ vary according to $L/K$ is the responsible of the existence of three different families of polynomials. The recurrence is solved with this information by a straightforward induction.

The source of motivation of the author to find these polynomials is the Langlands Program. 
The central conjecture of the Langlands Program is the Principle of Functoriality. This important conjecture remains unproven despite it being known in several cases which have remarkable implications in number theory, representation theory and harmonic analysis.

In \cite{LanBE04}, Langlands suggests a new approach to try to prove the principle of Functoriality which has come to be known as Beyond Endoscopy. Vaguely speaking, it is a strategy to construct a new trace formula which would allow a more careful analysis of the automorphic $L-$functions.

In \cite{LanBE04}, a particular formula for the product of certain orbital integrals for each of the local completions of $\mbox{GL}(2, \mathbb{Q})$ is used in a prominent role for the whole analysis. It is not clear how to generalize this formula to $\mbox{GL}(n, \mathbb{Q})$ for $n>2$. 

In \cite{ARTHUR2018425}, Arthur conjectures that such a generalization might be available in the work \cite{ZYun} of Zhiwei Yun. Hence, it becomes relevant to understand if the formula used by Langlands in \cite{LanBE04} for the product of those orbital integrals in $\mbox{GL}(2, \mathbb{Q})$ can be deduced from the zeta functions of orders in the way Arthur predicts. This turns out to be the case and the explanation of this is the topic of \cite{malors21}. The first step for the verification of the conjecture of Arthur is the explicit computations of the polynomials of orders that we have explained above. 

Furthermore, \cite{LanBE04} and \cite{ARTHUR2018425} work over $\mathbb{Q}$. Here we work over general $p-$adic fields and their quadratic extensions, which has as consequence that the formula of Langlands can be generalized for general algebraic number fields. We develop this formula in \cite{malors21} based on the work of this paper.

A more careful discussion of the literature is in order now. Similar Zeta functions associated with orders were defined and studied by Solomon in \cite{Solomon1977ZetaFA}. Their properties were furthermore understood and developed in the important articles \cite{Reiner1}, \cite{Reiner3}, \cite{Reiner2} by Bushnell and Reiner. These zeta functions differ from those we discuss in this paper in that they count $\cO_K$-lattices of rank $2$ inside of $\cO_n$, while the ones defined by Yun in \cite{ZYun} count ideals (after some simplifications explained in section 2). As we have mentioned, the interest of our investigations stems from Beyond Endoscopy and its specific relationship with the work of Yun (see \cite{ZYun}, section 4). Hence, the need to study the ideal counting zeta functions as we do in this paper.

In \cite{Promode} a recurrence relation for the zeta functions of lattices of the same sequence of orders is deduced (see equation (32) in \cite{Promode}). That recurrence relation is similar to the one we obtain but not the same. They deduce their recurrence relation by integral methods while we do it by isolating the principal ideals and organizing them by their \textit{type}. This allows to construct explicitly all ideals of one of the given orders $\cO_n$ by following the recursive method. At the moment of writing, whether one recurrence can be deduced from the other is unclear to the author and remains an interesting question. Finally, similar manipulations as the ones we do to count ideals of each type appear in \cite{ibukiyama2023genus}. 

Let us now conclude with a short overview of the organization of this paper. In section \ref{sectionzetafunctionsoforders} we review the results of \cite{ZYun} that we will need. In section \ref{sectionarithmetic} we prove the dichotomy mentioned above that ideals satisfy. In section \ref{studyoftheprincipalpart} we prove the expansion of the principal part $\zeta_n^P(s)$. Finally, in section \ref{solutionoftherecurrence} we compute the indices of the unit subgroups and solve the recurrence.

\section{Zeta Functions of Orders}\label{sectionzetafunctionsoforders}

In this article, $K$ will always be a $p-$adic field with uniformizer $p$. We let $q$ be the cardinality of its residue field. $\cO_K$ will denote its ring of integers, $\valp$ its $p$-adic valuation.

In \cite{ZYun}, Yun defines zeta functions associated to orders within finite dimensional reduced $K$-algebras. In this section we will discuss the results of \cite{ZYun} that we need suited out for our purposes. 

Let $L$ be a finite dimensional reduced $K-$algebra and $\cO_L$ the integral closure of $\cO_K$ in $L$. We now define the orders associated to the extension $L/K$.
\begin{definition}
An \textbf{order} $\cO\subseteq L$ is a finitely generated $\cO_K$-module such that
\begin{equation*}
    \cO\otimes_{\cO_K} K = L. 
\end{equation*}
An order $\cO$ is \textbf{monogenic} if there exists an element $\Delta\in L$ such that
\begin{equation*}
    \cO = \cO_K[\Delta].
\end{equation*}
\end{definition}
\begin{remark}
    Classically, orders are only defined for the case when $L/K$ is a field extension. This definition extends the standard one, but several results of the classical theory have to be shown in a different way because $\cO_L$ is no longer a discrete valuation ring nor a local ring. Monogenicity will allow us to recover many of these results.
\end{remark}

\begin{definition}\label{zetafunction}
Let $\cO\subseteq \cO_L$ be a monogenic order. We define its \textbf{zeta function} as
\begin{equation*}
    \zeta_{\cO}(s) = \displaystyle\sum_{I\subseteq\cO} \dfrac{1}{[\cO : I]^s},
\end{equation*}
where the sum runs over all ideals $I\subseteq\cO$ of finite index. 
\end{definition}
 \begin{remark}\label{isomorphictodual}
Let $\cO\subseteq\cO_L$ be a general order, not necessarily monogenic. Its dual order is defined as
\begin{equation*}
    \check{\cO} = \left\{x\in L \mid \langle x, y \rangle \in \cO_K \mbox{ for all } y\in \cO\right\},
\end{equation*}
where $\langle \cdot, \cdot\rangle$ is a pairing that makes $\cO_L$ self dual. In this context, Yun defines the zeta function as
\begin{equation*}
      \zeta_{\cO}(s) = \displaystyle\sum_{I\subseteq\check{\cO}} \dfrac{1}{[\cO : I]^s},
\end{equation*}
where the sum runs over all $\cO-$submodules of $\check{\cO}$ of finite index. When $\cO$ and $\check{\cO}$ are isomorphic as $\cO-$modules both definitions of the zeta functions coincide. Monogenic orders satisfy this, which explains why we work with definition \ref{zetafunction}. We refer the reader to \cite{ZYun} for a more detailed discussion.
\end{remark}

It is well known that $L$ can be written as a product of $g$ fields
\begin{equation*}
    L = L_1\times...\times L_g, 
\end{equation*}
for some positive integer $g$. For each $i = 1,..., g$, the residue field of $L_i$ has cardinality $Q_i = q^{f_i}$ and the prime ideal $\pi_i\cO_{L_i}$ of $\cO_{L_i}$ satisfies 
\begin{equation*}
    (\pi_i \cO_{L_i})^{e_i} = p\cO_{L_i},
\end{equation*}
where $\pi_i$ is a uniformizer of $\cO_{L_i}$. We define a function of a complex parameter $s$ as 
\begin{align*}
    V(s) &= \displaystyle\prod_{i = 1}^g(1 - q^{-f_is}),
\end{align*}
Let $\cO\subseteq\cO_L$ be a monogenic order. Define 
\begin{equation*}
    n:= \mbox{length}_{\cO_K}(\cO_L/\cO) = \log_q([\cO_L: \cO]).
\end{equation*}
 Notice that $V(s)$ does not depend on the order $\cO$ selected. Now we can state the following
\begin{proposition}\label{polynomial}
 In the context of the previous discussion, define the function
\begin{equation*}
    \tilde{J}(s) = q^{ns}V(s)\zeta_{\cO}(s).
\end{equation*}
Then there exists a polynomial $P(x)\in 1 + x\mathbb{Z}[x]$ of degree $2n$ such that
\begin{equation*}
    V(s)\zeta_{\cO}(s) = P(q^{-s}).
\end{equation*}
Furthermore, $\tilde{J}$ satisfies the functional equation
\begin{equation*}
    \tilde{J}(s) = \tilde{J}(1 - s),
\end{equation*}
or equivalently, the polynomial $P$ satisfies
\begin{equation*}
    (qx^2)^{n}P\left(\dfrac{1}{qx}\right) = P(x).
\end{equation*}
This is proved in \cite{ZYun} as theorem 2.5.
\end{proposition}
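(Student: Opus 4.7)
The plan is to prove polynomiality, degree $2n$, and the functional equation together, following the general shape of Yun's argument in \cite{ZYun}. I would organize around three ingredients: (i) a direct evaluation of $\zeta_{\cO_L}$ as the base case, (ii) a stratification of the ideals $I\subseteq\cO$ by their extension $I\cO_L$ to the maximal order, which controls the numerator, and (iii) a duality involution producing the symmetry.

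For the base case $\cO = \cO_L$ I would use the product decomposition $\cO_L = \prod_{i=1}^g \cO_{L_i}$. Every finite-index ideal factors uniquely as $\prod_i \pi_i^{a_i}\cO_{L_i}$ with index $\prod_i q^{f_i a_i}$. Summing geometric series gives $\zeta_{\cO_L}(s) = \prod_i(1-q^{-f_is})^{-1} = 1/V(s)$, so $V(s)\zeta_{\cO_L}(s) = 1$ is the polynomial $P = 1$ of degree $2n = 0$, and the functional equation is trivial. For general monogenic $\cO \subseteq \cO_L$, each finite-index ideal $I \subseteq \cO$ extends to an ideal $I\cO_L \subseteq \cO_L$ of the explicit form above, so the sum stratifies by the choice of $I\cO_L$. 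Each fiber $\{I : I\cO_L \mbox{ fixed}\}$ is finite and controlled by how $I$ sits inside $I\cO_L$, which in turn is bounded by the length $n$ of the conductor of $\cO$ in $\cO_L$. This should express $\zeta_{\cO}(s)$ as a finite $\mathbb{Z}[q^{-s}]$-combination of shifted copies of $1/V(s)$; clearing $V$ yields $P(q^{-s})$ with $\deg P \le 2n$.

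For the functional equation I would use the trace pairing and the induced involution $I \mapsto I^{\vee} := \{x \in L : \langle x, I\rangle \subseteq \cO_K\}$ on the set of finite-index ideals. Monogenicity gives $\cO \cong \check{\cO}$ as $\cO$-modules (see Remark \ref{isomorphictodual}), so $I^{\vee}$ again lies in $\cO$ up to this isomorphism, and a standard index computation of the form $[\cO:I]\cdot[\cO:I^{\vee}] = q^{2n}\cdot(\mbox{factor from }\cO_L)$ translates into $\tilde{J}(s) = \tilde{J}(1-s)$. Rewriting in $x = q^{-s}$ gives the claimed symmetry $(qx^2)^{n}P\left(\dfrac{1}{qx}\right) = P(x)$.

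The main obstacle I anticipate is pinning down the degree to be exactly $2n$, together with the normalization $P \in 1 + x\mathbb{Z}[x]$. The constant term $P(0) = 1$ is immediate, since only $I = \cO$ contributes an index $1$. The degree claim is subtler: the cleanest route is to derive it from the functional equation itself, which forces the leading coefficient to equal $q^n$ times the constant term. For this reason the polynomial structure, the degree count, and the symmetry are best established in tandem rather than separately, as is achieved in Yun's treatment by identifying the generating function with a Poincaré series on a geometric object where both features are manifest.
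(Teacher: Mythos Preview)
The paper does not actually prove this proposition: the statement itself ends with ``This is proved in \cite{ZYun} as theorem 2.5,'' and there is no further argument in the text. So there is nothing to compare your proposal against within the present paper; you are supplying what the author deliberately outsourced to Yun.

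That said, your sketch is broadly faithful to the structure of Yun's argument and is internally coherent. The base case computation of $\zeta_{\cO_L}(s) = 1/V(s)$ is correct, the idea of stratifying by the extension $I\cO_L$ to reduce to a finite combination of shifts of $1/V(s)$ is the right mechanism for polynomiality, and the duality involution via the trace pairing is indeed the source of the functional equation. Your observation that the exact degree $2n$ is most cleanly read off from the functional equation together with $P(0)=1$ is also correct: once $(qx^2)^n P(1/(qx)) = P(x)$ holds and the constant term is $1$, the top coefficient is forced to be $q^n$, so $\deg P = 2n$. The one place where your sketch remains genuinely schematic is the index identity underlying the functional equation; making precise the statement ``$[\cO:I]\cdot[\cO:I^{\vee}] = q^{2n}\cdot(\text{factor from }\cO_L)$'' and checking that the involution really permutes the index set of the sum requires the monogenicity hypothesis in an essential way (to identify $\cO$ with $\check{\cO}$), and the details there are where the actual work in \cite{ZYun} lies.
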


 The polynomials of this proposition are the ones we will find, for a particular set of orders, in subsequent sections of this paper.


\section{Arithmetic of the Orders $\cO_n$}\label{sectionarithmetic}

From now on we suppose $L$ is a reduced $K-$algebra of dimension $2$ over $K$. This implies $L$ is a quadratic extension of $K$ or $L = K\times K$. In the latter case, we always consider $K$ embedded diagonally in $L$. 

\begin{definition}
In the context of the above discussion, we refer to the case $L = K \times K$ as the \textbf{split case}. On the other hand, when $L$ is a quadratic field extension of $K$ we refer to the case as the \textbf{nonsplit case}.  
\end{definition}
When we do not distinguish between the split and nonsplit cases it is because the definitions or arguments make sense for both cases.

Recall that $\cO_L$ is the integral closure of $\cO_K$ in $L$. In the nonsplit case this is the ring of integers of $L$, while in the split case it coincides with $\cO_K\times \cO_K$. Furthermore, since $\cO_K$ a discrete valuation ring, every $\cO_K-$algebra of finite dimension is generated by a single element. We denote by $\Delta\in \cO_L$ an element such that
    \begin{equation*}
        \cO_L = \cO_K[\Delta].
    \end{equation*}
In the split case, we put $\Delta = (\Delta_1, \Delta_2)$. Since $\cO_L$ is the integral closure within $L$ of $\cO_K$, $\Delta$ satisfies a quadratic equation. We suppose it is 
    \begin{equation*}
        \Delta^2 = \tau_{\Delta}\Delta - \delta_{\Delta},
    \end{equation*}
    with $\tau_{\Delta}, \delta_{\Delta}\in \cO_K$.
    
In the nonsplit case we denote by $\pi$ a uniformizer of $\cO_L$ and by $\valpi$ the corresponding valuation of $L$. We write $Q$ for the cardinality of the residue field and $N_{L/K}$ for the norm of the extension $L/K$. We let $e$ and $f$ be the ramification and inertia degrees of the extension $L/K$. By definition, they satisfy
    \begin{align*}
        Q &= q^f,\\
        p\cO_L &= (\pi \cO_L)^e.
    \end{align*}
We also have $ef = 2$. It will be important to distinguish between the two possible nonsplit cases. We give the following
\begin{definition}
    Let $L$ be a quadratic field extension of $K$. We refer to the case $e = 2$ as the \textbf{ramified case}. When $e = 1$ we refer to it as the \textbf{unramified case}. 
\end{definition}

In the split case, we also denote by $N_{L/K}$ the norm of $L/K$. It is given by
\begin{equation*}
    N_{L/K}(x_1, x_2) = (x_1, x_2)(x_2, x_1) = x_1x_2\cdot(1, 1)\in K.
\end{equation*}
We will occasionally use the notation $\mathbf{1}=(1, 1)$ to distinguish between the unit of $\cO_K$ and that of $\cO_L$. We do this when the arguments require the use of the coordinates of $K\times K$. Otherwise, we use $1$ as the unit of $\cO_L$ regardless of the case. 

With these conventions at hand we can define the sequence of orders we will be interested in:
\begin{definition}
We define the \textbf{main sequence of orders} by
\begin{equation*}
    \cO_n := \cO_K[p^n\Delta], \; n\ge 0.
\end{equation*}
More specifically, $\cO_n\subseteq \cO_L$ consists of the elements of $\cO_L$ that can be written as
\begin{equation*}
    x + yp^{n}\Delta,
\end{equation*}
for some unique $x, y\in \cO_K$. Notice that the main sequence of orders is independent of the choice of $\Delta$.
\end{definition}
\begin{remark}
The orders $\cO_0, \cO_1,...$ are monogenic and thus satisfy $\cO_n = \check{\cO_n}$, as explained in remark \ref{isomorphictodual} in page \pageref{isomorphictodual}.  For the nonsplit case, $\cO_L$ is a discrete valuation ring and the isomorphism follows from the general theory of orders inside discrete valuation rings. 

In the split case, $\cO_L = \cO_K\times \cO_K$ is not a discrete valuation ring. The isomorphism nevertheless holds by a standard explicit computation of the isomorphism.  Indeed, it can be easily computed that
\begin{equation*}
    \check{\cO_n} = \cO_K\cdot v + \cO_K\cdot w,
\end{equation*}
where
\begin{equation*}
    v =\dfrac{1}{\Delta_2 - \Delta_1}(\Delta_2, 
    -\Delta_1),\;
    w = \dfrac{p^{-n}}{\Delta_2 - \Delta_1}(1, -1).
\end{equation*}
The $\cO_n-$module isomorphism $R:\check{\cO_n} \longrightarrow \cO_n$ is the one characterized by 
\begin{align*}
    R(v) &= p^{2n}\delta_{\Delta,}\\
    R(w) &= p^n\Delta.
\end{align*}
We leave the details to the reader.
\end{remark}
 We aim at finding the explicit polynomials related to the zeta functions of $\cO_n$ by proposition \ref{polynomial}. 
Let $n\ge 0$ be an integer. By definition, $\cO_n$ is a free $\cO_K-$module of rank $2$. We get immediately
\begin{proposition}
    Let $I\subseteq \cO_n$ be an ideal of $\cO_n$. We then have $I$ is a free $\cO_K$-module of rank at most $2$. Its $\cO_K-$rank is $2$ if and only if $[\cO_n: I]$ is finite.
\end{proposition}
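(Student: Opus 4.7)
The statement is a direct application of the structure theory of finitely generated modules over the discrete valuation ring $\cO_K$, and my plan is to exploit that $\cO_K$ is a PID with finite residue field of cardinality $q$. Throughout I use that $\cO_n$ is a free $\cO_K$-module of rank $2$ with basis $\{1, p^n\Delta\}$, which is already in the definition of $\cO_n$ above.

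First I would establish the rank bound. The ideal $I\subseteq\cO_n$ is in particular an $\cO_K$-submodule of $\cO_n$ (it is closed under multiplication by $\cO_K\subseteq\cO_n$). Since $\cO_K$ is a PID and $\cO_n$ is free of rank $2$, the structure theorem for finitely generated modules over a PID forces $I$ to be free as an $\cO_K$-module of rank at most $2$; no torsion can appear since $I$ sits inside a torsion-free module.

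For the equivalence, I would tensor the short exact sequence
\begin{equation*}
    0 \longrightarrow I \longrightarrow \cO_n \longrightarrow \cO_n/I \longrightarrow 0
\end{equation*}
with $K$ over $\cO_K$. Since $K$ is the field of fractions of $\cO_K$, this is exact, and $\cO_n\otimes_{\cO_K}K = L$ is a $K$-vector space of dimension $2$. Writing $r$ for the $\cO_K$-rank of $I$, one has $\dim_K(I\otimes_{\cO_K} K) = r$, hence $\dim_K((\cO_n/I)\otimes_{\cO_K} K) = 2 - r$. The key observation is now that a finitely generated $\cO_K$-module $M$ is finite if and only if it is torsion: by the PID structure theorem such $M$ decomposes as a sum of a free part $\cO_K^a$ and cyclic torsion parts $\cO_K/(p^{k_i})$, and the free part is infinite while each $\cO_K/(p^{k_i})$ has cardinality $q^{k_i}$ since $\cO_K/(p)$ is the residue field of order $q$. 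Applied to $M = \cO_n/I$, this gives $[\cO_n:I]<\infty$ iff $(\cO_n/I)\otimes_{\cO_K} K = 0$ iff $r = 2$.

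There is no real obstacle here: the argument only uses that $\cO_n$ is a free $\cO_K$-module of rank $2$ (which holds uniformly in the ramified, unramified and split cases) together with the standard PID structure theorem, so the split case requires no special treatment at this stage.
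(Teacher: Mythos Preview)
Your proof is correct and is precisely the standard argument the paper has in mind. The paper itself offers no proof beyond the phrase ``We get immediately,'' relying on the reader to supply exactly the PID structure-theorem reasoning you have written out; your write-up simply makes explicit what the paper leaves implicit.
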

We use the above proposition to give the following
\begin{definition}
    Let $I\subseteq \cO_n$ be an ideal. We say $I$ is a \textbf{rank 2 ideal} if its $\cO_K-$rank is exactly $2$. We denote by $\cI_n$ the set of rank 2 ideals of $\cO_n$. 
\end{definition}
Notice that the rank $2$ ideals of $\cO_n$ are precisely the ones that contribute to the zeta function of $\cO_n$. We now give the following
\begin{definition}
For each integer $n\ge 0$, define the map
\begin{equation*}
    T_n: \cI_n \longrightarrow \cI_{n + 1},
\end{equation*}
by
\begin{equation*}
    T_n(J) = pJ.
\end{equation*}
We call $T_n$ the \textbf{traveling map}.
\end{definition}
Due to the facts that  $p\cO_n\subseteq\cO_{n + 1}$ and  that multiplication by $p$ does not change the $\cO_K-$rank, we deduce that the image of $T_n$ indeed lies in $\cI_{n + 1}$. The goal of this section is to prove that this image consists exactly of the nonprincipal ideals in $\cI_{n + 1}$. 

The following is straightforward:

\begin{proposition}
For each $n\ge 1$ define $\psi_n: \cO_n\longrightarrow \cO_K/p^n\cO_K$ by
\begin{equation*}
    \psi_n(x + yp^n\Delta) = x \pmod{p^n\cO_K}.
\end{equation*}
Then $\psi_n$ is a surjective ring homomorphism with kernel $p^n\cO_0$ which induces an $\cO_n-$module structure on $\cO_K/p^n\cO_K$ given by
    \begin{equation*}
        (x + yp^n\Delta)\cdot\overline{b} = \psi_n(x + yp^n\Delta)\overline{b} = \overline{xb},
    \end{equation*}
    where the overline means class in $\cO_K/p^n\cO_K$.
\end{proposition}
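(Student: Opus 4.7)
The statement has several parts to verify, and each is essentially a direct computation once the right observation is made. Because every element of $\cO_n$ has a \emph{unique} representation $x + yp^n\Delta$ with $x, y \in \cO_K$, the formula defining $\psi_n$ is unambiguous, and additivity follows at once from that uniqueness. So the real content lies in (i) multiplicativity, (ii) identifying the kernel, and (iii) unpacking the induced module structure.

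For multiplicativity, the plan is to expand
\begin{equation*}
(x_1 + y_1 p^n\Delta)(x_2 + y_2 p^n\Delta) = x_1 x_2 + (x_1 y_2 + x_2 y_1)\, p^n \Delta + y_1 y_2 \, p^{2n}\Delta^2,
\end{equation*}
and then use the quadratic relation $\Delta^2 = \tau_\Delta \Delta - \delta_\Delta$ (with $\tau_\Delta, \delta_\Delta \in \cO_K$) to rewrite the product in the standard $\cO_K$-basis $\{1, p^n\Delta\}$ of $\cO_n$. The $1$-coordinate becomes $x_1 x_2 - y_1 y_2 p^{2n}\delta_\Delta$, which is congruent to $x_1 x_2$ modulo $p^n\cO_K$ since $n \ge 1$ forces $p^{2n} \in p^n\cO_K$. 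Hence $\psi_n\bigl((x_1 + y_1 p^n\Delta)(x_2 + y_2 p^n\Delta)\bigr) = \overline{x_1 x_2} = \psi_n(x_1 + y_1 p^n\Delta)\,\psi_n(x_2 + y_2 p^n\Delta)$. The same expansion handles the split case uniformly, since there $\Delta = (\Delta_1,\Delta_2)$ still satisfies a quadratic equation over $\cO_K$.

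For surjectivity, given a class $\overline{b} \in \cO_K/p^n\cO_K$, lift to $b \in \cO_K$ and observe $\psi_n(b) = \overline{b}$, since $b = b + 0\cdot p^n\Delta$ lies in $\cO_n$. For the kernel, $\psi_n(x + y p^n\Delta) = 0$ forces $x \in p^n\cO_K$, say $x = p^n z$; then $x + y p^n\Delta = p^n(z + y\Delta) \in p^n\cO_K[\Delta] = p^n \cO_0$. Conversely, any element of $p^n\cO_0$ can be written as $p^n(z + y\Delta) = p^n z + y p^n \Delta$ with $p^n z \in p^n \cO_K$, so it lies in $\ker \psi_n$.

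Finally, the module structure is the pullback of scalars along the ring homomorphism $\psi_n$: given a ring map $\cO_n \to \cO_K/p^n\cO_K$, the target is automatically an $\cO_n$-module via $\alpha\cdot \overline{b} := \psi_n(\alpha)\,\overline{b}$, and substituting $\alpha = x + yp^n\Delta$ gives exactly the claimed formula $\overline{xb}$. No step is a genuine obstacle; the one place to be careful is confirming that the $\Delta^2$-term in the product expansion, after substituting $\Delta^2 = \tau_\Delta\Delta - \delta_\Delta$, carries a factor $p^{2n}$ in both coordinates so that it dies modulo $p^n$.
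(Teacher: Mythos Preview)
Your proof is correct and complete. The paper itself does not supply a proof of this proposition; it simply prefaces the statement with ``The following is straightforward:'' and moves on, so your write-up is exactly the routine verification the paper has in mind, carried out via the quadratic relation $\Delta^2 = \tau_\Delta \Delta - \delta_\Delta$ and the uniqueness of the $\cO_K$-basis representation in $\cO_n$.
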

The first isomorphism theorem now implies
\begin{corollary}\label{quotientequality}
In the context of the previous discussion,
\begin{equation*}
    \dfrac{\cO_n}{p^n\cO_0} \cong \dfrac{\cO_K}{p^n\cO_K}.
\end{equation*}
Notice the right hand side depends only on $K$ and not on the extension $L$.
\end{corollary}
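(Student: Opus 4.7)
The plan is to invoke the First Isomorphism Theorem applied to the surjective ring homomorphism $\psi_n$ furnished by the preceding proposition. Since the hypotheses that $\psi_n$ is a surjective ring homomorphism and that its kernel equals $p^n\cO_0$ have both been recorded in that proposition, the theorem yields at once the claimed ring isomorphism
\begin{equation*}
    \cO_n/p^n\cO_0 \;\cong\; \cO_K/p^n\cO_K.
\end{equation*}

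The substantive content is therefore already contained in the proposition, and the proof proposal reduces to making the hypotheses transparent. Surjectivity is immediate: for every $b \in \cO_K$ the element $b + 0\cdot p^n\Delta \in \cO_n$ maps to $\overline{b}$. For the kernel identification, note that an element $x + y p^n\Delta \in \cO_n$, written uniquely with $x,y \in \cO_K$, lies in $\ker\psi_n$ iff $x \in p^n\cO_K$; writing $x = p^n x'$ with $x' \in \cO_K$ gives $x + y p^n \Delta = p^n(x' + y\Delta) \in p^n\cO_0$, and conversely every element of $p^n\cO_0$ has this shape. Thus $\ker \psi_n = p^n\cO_0$ exactly.

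The only point worth flagging, which is handled implicitly by the proposition, is that $\psi_n$ really is a ring homomorphism; this rests on the unique presentation $x + y p^n\Delta$ of elements of $\cO_n$ together with the quadratic relation $\Delta^2 = \tau_\Delta \Delta - \delta_\Delta$, which expresses the product $(x_1 + y_1 p^n\Delta)(x_2 + y_2 p^n\Delta)$ in the form $x_1 x_2 + (\cdots)p^n\Delta + y_1 y_2 p^{2n}(\tau_\Delta \Delta - \delta_\Delta)$, so that upon reducing the $\cO_K$-component modulo $p^n$ everything outside $x_1 x_2$ vanishes. There is no real obstacle here; the corollary is a clean application of the first isomorphism theorem once the decomposition $\cO_n = \cO_K \oplus p^n\Delta\,\cO_K$ is at hand, and the final remark that the right-hand side depends only on $K$ is visible directly from the formula.
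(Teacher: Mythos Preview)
Your proposal is correct and follows exactly the paper's approach: the corollary is introduced in the paper with the sentence ``The first isomorphism theorem now implies,'' so the intended proof is precisely the one-line application of the First Isomorphism Theorem to $\psi_n$ that you give. Your additional verification of surjectivity, the kernel computation, and the multiplicativity of $\psi_n$ simply unpacks what the preceding proposition asserts, and is consistent with it.
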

 
Observe that $\cO_i$ is an $\cO_n-$module for $0\le i \le n$ with $\cO_n\subseteq \cO_i\subseteq \cO_0$. In fact there are no others, as we now show.

\begin{proposition}\label{onmodulesunsplit}
For $n\ge 0$, the only $\cO_n-$modules $M$ with
\begin{equation*}
    \cO_n\subseteq M\subseteq \cO_0
\end{equation*}
are precisely $\cO_0, \cO_1,...., \cO_n$.
\end{proposition}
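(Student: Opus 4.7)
The plan is to reduce the classification of $\cO_n$-modules $M$ with $\cO_n \subseteq M \subseteq \cO_0$ to the classification of ideals of the DVR $\cO_K$. The key observation is that such an $M$ is automatically an $\cO_K$-submodule of the free rank-two $\cO_K$-module $\cO_0 = \cO_K \oplus \cO_K\Delta$ (because $\cO_K \subseteq \cO_n \subseteq M$), so $M$ should be determined by its projection onto the $\Delta$-coefficient.

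First, I would introduce the $\cO_K$-linear projection $\pi\colon \cO_0 \to \cO_K$ defined by $\pi(a + b\Delta) = b$. This is well defined because $\{1, \Delta\}$ is an $\cO_K$-basis of $\cO_0$, a fact that holds uniformly in the split and nonsplit cases. Then $\pi(M)$ is an ideal of $\cO_K$, hence equals $p^i\cO_K$ for a unique $i \geq 0$. The inclusion $\cO_n \subseteq M$ gives $p^n\Delta \in M$, forcing $p^n \in \pi(M)$ and therefore $0 \leq i \leq n$.

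Next, I would prove the recovery formula $M = \cO_K + \pi(M)\Delta$. The inclusion $M \subseteq \cO_K + \pi(M)\Delta$ is immediate, by writing $m \in M$ as $a + \pi(m)\Delta$. For the reverse inclusion, given $b \in \pi(M)$, pick some $a_0 \in \cO_K$ with $a_0 + b\Delta \in M$ and exploit $\cO_K \subseteq M$ to conclude that $a + b\Delta = (a - a_0) + (a_0 + b\Delta) \in M$ for every $a \in \cO_K$. This gives
\begin{equation*}
M = \cO_K + p^i\cO_K \cdot \Delta = \cO_K[p^i\Delta] = \cO_i.
\end{equation*}
Conversely, each $\cO_i$ with $0 \leq i \leq n$ satisfies $\cO_n \subseteq \cO_i \subseteq \cO_0$ and is an $\cO_n$-module, completing the list.

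The main conceptual point to keep in mind is the split case, where $\cO_L = \cO_K \times \cO_K$ is not a DVR and carries no valuation one could exploit. The approach above sidesteps this issue entirely because it uses only the free $\cO_K$-module structure of $\cO_0$ on $\{1, \Delta\}$, which is available uniformly in both cases. Beyond this, the argument is routine linear algebra over the DVR $\cO_K$, and I do not expect any substantial obstacle.
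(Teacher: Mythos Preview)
Your proof is correct and follows essentially the same idea as the paper: both arguments pin down $M$ by the $\cO_K$-ideal of $\Delta$-coefficients of its elements. The paper's version is packaged via a basis argument---it chooses an $\cO_K$-basis of $M$, swaps one element for $1$, and reads off $k=\valp(y)$ from the surviving basis element $x+y\Delta$---whereas you use the projection $\pi$ onto the $\Delta$-coordinate and identify $\pi(M)=p^i\cO_K$ directly. Your route is marginally cleaner in that it does not invoke freeness of $M$ or the basis-exchange step; the paper's version is terser but leaves the claim ``one of $\{1,\varepsilon_1\}$ or $\{1,\varepsilon_2\}$ is also an $\cO_K$-basis'' to the reader. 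Either way the content is the same, and both work uniformly in the split and nonsplit cases for the reason you note.
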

\begin{proof}
    Let $\varepsilon_1, \varepsilon_2$ be an $\cO_K$ basis of $M$.  We must have that one of $\{1, \varepsilon_1\}$ or $\{1, \varepsilon_2\}$ is also an $\cO_K$ basis of $M$, because $1\in\cO_n\subseteq M$. Say it is $\{1, \epsilon_1\}$. 
    
    Write $\varepsilon_1 = x + y\Delta$, for some $x, y\in\cO_K$. Such $x$ and $y$ exist since $M\subseteq \cO_0 = \cO_L$. It follows easily that
    \begin{equation*}
        M = \cO_K[p^k\Delta],
    \end{equation*}
    where $k = \valp(y)$. Furthermore, $0\le k \le n$, for otherwise $p^n\Delta\in M$ would not be possible.
\end{proof}
We get the following important consequence from the previous proposition.
\begin{corollary}
    Let $n\ge 0$ and $\cO_n\subseteq M \subseteq \cO_0$ be an $\cO_n-$module. Then $M$ is a $\cO_n-$algebra.
\end{corollary}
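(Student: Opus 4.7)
The plan is to leverage Proposition \ref{onmodulesunsplit} directly: it reduces the problem to checking that each of the finitely many candidate modules $\cO_0, \cO_1, \ldots, \cO_n$ is actually an $\cO_n$-algebra. Since each $\cO_i = \cO_K[p^i\Delta]$ is by construction a subring of $\cO_L$, the only thing to check is that multiplication in $\cO_i$ is compatible with the $\cO_n$-module structure inherited from $\cO_n \subseteq \cO_i$.

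First I would invoke the previous proposition to write $M = \cO_i$ for some $0 \le i \le n$. Next I would observe that $\cO_i$ is a commutative subring of $\cO_L$ containing $\cO_K$ (and in particular containing $1$), because it is generated as a ring by $\cO_K$ and the element $p^i\Delta$, which satisfies the monic polynomial obtained from $\Delta^2 = \tau_\Delta\Delta - \delta_\Delta$ (namely $(p^i\Delta)^2 = p^i\tau_\Delta(p^i\Delta) - p^{2i}\delta_\Delta$). Then the chain of inclusions $\cO_n \subseteq \cO_i \subseteq \cO_0 \subseteq \cO_L$ consists of ring maps, so the inclusion $\cO_n \hookrightarrow \cO_i$ is a ring homomorphism, which is exactly the data of an $\cO_n$-algebra structure on $\cO_i$ refining its $\cO_n$-module structure.

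There is essentially no obstacle here: the content is entirely in Proposition \ref{onmodulesunsplit}, which guarantees the rigidity of intermediate modules, and once that is used the corollary is a one-line verification. The only minor subtlety worth flagging is the split case, where $\cO_L = \cO_K \times \cO_K$ is not a domain; but this plays no role because we are only asserting the ring structure on $\cO_i$, and that ring structure is simply the one restricted from $\cO_L$, which is a ring regardless of whether $L/K$ is split or not.
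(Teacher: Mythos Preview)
Your proposal is correct and follows exactly the approach the paper intends: the corollary is stated as an immediate consequence of Proposition~\ref{onmodulesunsplit}, and once $M = \cO_i$ for some $0 \le i \le n$, the fact that $\cO_i = \cO_K[p^i\Delta]$ is a subring of $\cO_L$ containing $\cO_n$ gives the $\cO_n$-algebra structure directly. The paper offers no further detail, so your write-up is if anything more explicit than what appears there.
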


Another easy fact states what the conductor between different orders is. More concretely, we have the following
\begin{proposition}\label{conductor}
The conductor ideal
\begin{equation*}
    (\cO_{n + m}: \cO_n) := \{ z\in \cO_{n + m} \;\mid\; z\cO_n\subseteq \cO_{n + m}\},
\end{equation*}
is given by
\begin{equation*}
    (\cO_{n + m}: \cO_n) = p^{m}\cO_n.
\end{equation*}
In particular, $(\cO_n: \cO_0) = p^n\cO_0$.
\end{proposition}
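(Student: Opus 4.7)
The plan is to reduce checking the conductor condition to a single generator computation, exploiting the fact that $\cO_n = \cO_K + \cO_K p^n\Delta$ is generated as an $\cO_K$-algebra (indeed as an $\cO_K$-module) by $1$ and $p^n\Delta$. Since $1 \cdot z = z$ and $z$ already lies in $\cO_{n+m}$, the condition $z \cO_n \subseteq \cO_{n+m}$ reduces to the single requirement $z \cdot p^n\Delta \in \cO_{n+m}$.

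First I would write an arbitrary $z \in \cO_{n+m}$ in the form $z = a + b p^{n+m}\Delta$ with $a, b \in \cO_K$, and compute
\begin{equation*}
z \cdot p^n\Delta \;=\; a p^n \Delta + b p^{2n+m}\Delta^2.
\end{equation*}
Using the relation $\Delta^2 = \tau_\Delta \Delta - \delta_\Delta$ (valid in both the split and nonsplit cases since $\cO_L$ is monogenic over $\cO_K$), this rewrites as
\begin{equation*}
z\cdot p^n\Delta \;=\; -b p^{2n+m}\delta_\Delta \;+\; \bigl(a p^n + b p^{2n+m}\tau_\Delta\bigr)\Delta.
\end{equation*}
The constant term is in $\cO_K \subseteq \cO_{n+m}$ automatically, and by the unique expression of elements of $\cO_{n+m}$ as $x + y p^{n+m}\Delta$, the expression above lies in $\cO_{n+m}$ iff the $\Delta$-coefficient lies in $p^{n+m}\cO_K$. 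Since $b p^{2n+m}\tau_\Delta$ is already in $p^{n+m}\cO_K$, this is equivalent to $a \in p^m \cO_K$. Hence $z = p^m a' + b p^{n+m}\Delta = p^m(a' + b p^n \Delta) \in p^m \cO_n$, proving one inclusion.

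For the reverse inclusion, I would just note that any $z = p^m w$ with $w = x + y p^n\Delta \in \cO_n$ lies in $\cO_{n+m}$ (rewrite as $p^m x + y p^{n+m}\Delta$), and since $\cO_n$ is a ring, $z \cdot \cO_n = p^m \cdot (w \cO_n) \subseteq p^m \cO_n \subseteq \cO_{n+m}$ by the same rewriting. There is no real obstacle: the only thing to be careful about is that the computation uses the relation $\Delta^2 = \tau_\Delta \Delta - \delta_\Delta$, which was set up in both the split and nonsplit cases at the beginning of the section, so the proof goes through uniformly. The particular case $(\cO_n : \cO_0) = p^n \cO_0$ is the instance $n \leftarrow 0$, $m \leftarrow n$.
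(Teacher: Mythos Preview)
Your proof is correct and follows essentially the same approach as the paper: reduce the conductor condition to checking multiplication by the single generator $p^n\Delta$, expand using $\Delta^2 = \tau_\Delta\Delta - \delta_\Delta$, and read off the divisibility condition $p^m \mid a$. You are slightly more explicit about the reverse inclusion than the paper, which simply notes that all steps are reversible.
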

\begin{proof}
For $a + bp^{n + m}\Delta$ to satisfy
\begin{equation*}
    (a + bp^{n + m}\Delta)(x + yp^n\Delta)\in \cO_{n + m},
\end{equation*}
for all $x, y\in \cO_K$, it is enough that it does it for $x = 0$ and $y = 1$. In this case we have
\begin{equation*}
    (a + bp^{n + m}\Delta)(p^n\Delta) = -bp^{2n + m}\delta_{\Delta} + (ap^n +  bp^{n+m}\tau_{\Delta})p^n\Delta.
\end{equation*}
This implies $p^{n + m}|ap^n$, which is equivalent to $p^m|a$. Writing $a = p^ma_0$, we get
\begin{equation*}
    a + bp^{n + m}\Delta = p^m(a_0 + bp^n\Delta)\in p^m\cO_n,
\end{equation*}
as desired. Because all the steps are reversible, the double inclusion holds. This concludes the proof.
\end{proof}

We now study the units of $\cO_n$. Our aim is to give a characterization of them that is easy to use and verify.
\begin{proposition}
Let $n\ge 1$. Then $a + bp^n\Delta \in \cO_n$ is a unit of $\cO_L$ if and only if $a$ is a unit of $\cO_K$.
\end{proposition}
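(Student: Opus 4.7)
The plan is to argue via reduction modulo the maximal ideal(s) of $\cO_L$, handling the split and nonsplit cases in parallel and reducing both to the same simple observation: since $n\ge 1$, the term $bp^n\Delta$ lies in the Jacobson radical of $\cO_L$, so $a + bp^n\Delta$ is a unit precisely when $a$ is.

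First I would set up the nonsplit case. Here $\cO_L$ is the discrete valuation ring with uniformizer $\pi$, and an element is a unit iff its image in the residue field $\cO_L/(\pi)$ is nonzero. Using $p\cO_L = (\pi)^e$ with $e \in \{1,2\}$, we have $\valpi(p^n) = en \ge 1$, so $p^n\Delta \in (\pi)$. Hence $a + bp^n\Delta \equiv a \pmod{\pi}$. Since $\pi\cO_L \cap \cO_K = p\cO_K$, the class of $a$ is nonzero in $\cO_L/(\pi)$ iff $a \notin p\cO_K$, i.e., iff $a\in\cO_K^*$.

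Next I would treat the split case, where $\cO_L = \cO_K\times\cO_K$ and the units are precisely those elements with both coordinates in $\cO_K^*$. Writing $\Delta = (\Delta_1,\Delta_2)$, we have
\begin{equation*}
    a + bp^n\Delta = (a + bp^n\Delta_1,\; a + bp^n\Delta_2),
\end{equation*}
and since $n\ge 1$ each coordinate satisfies $a + bp^n\Delta_i \equiv a \pmod{p\cO_K}$. Hence both coordinates are units of $\cO_K$ iff $a$ itself is a unit of $\cO_K$.

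Alternatively, one could handle both cases uniformly via the norm: the element $a + bp^n\Delta$ is a unit of $\cO_L$ iff $N_{L/K}(a + bp^n\Delta) \in \cO_K^*$, and using $\Delta^2 = \tau_\Delta\Delta - \delta_\Delta$ a direct calculation gives
\begin{equation*}
    N_{L/K}(a + bp^n\Delta) = a^2 + abp^n\tau_\Delta + b^2p^{2n}\delta_\Delta,
\end{equation*}
in both the split and nonsplit cases. Reducing modulo $p$ (using $n\ge 1$) yields $a^2$, so the norm is a unit of $\cO_K$ iff $a$ is. There is no real obstacle here; the only point worth care is the split case, where one must remember that units of $\cO_L$ require both components to be units of $\cO_K$, and that the hypothesis $n\ge 1$ is essential (for $n=0$ one has $\cO_0 = \cO_L$ and the statement simply fails).
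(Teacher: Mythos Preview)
Your proposal is correct. In fact, the ``alternative'' you sketch at the end---computing the norm $N_{L/K}(a + bp^n\Delta) = a^2 + abp^n\tau_\Delta + b^2p^{2n}\delta_\Delta$ and observing that it is a unit of $\cO_K$ iff $a$ is---is exactly the paper's proof, essentially word for word.

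Your primary approach, reducing modulo the maximal ideal(s) of $\cO_L$ and observing that $bp^n\Delta$ vanishes there since $n\ge 1$, is a slightly different but equally short route. It has the minor advantage of making the role of the hypothesis $n\ge 1$ and the structure of $\cO_L$ (local in the nonsplit case, semilocal in the split case) completely transparent, whereas the norm argument packages both cases into a single formula. Either way the content is the same: the perturbation $bp^n\Delta$ lies in the Jacobson radical of $\cO_L$, so adding it cannot change whether $a$ is a unit.
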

\begin{proof}
For $x\in \cO_L$, we know $x\in\cO_L^*$ if and only if $N_{L/K}(x)\in\cO_K^*$. From 
\begin{equation*}
    N_{L/K}(a + bp^n\Delta) = a^2 + p^nab\tau_{\Delta} + b^2p^{2n}\delta_{\Delta},
\end{equation*}
we deduce $N_{L/K}(a + bp^n\Delta)$ is a unit if and only if $a$ is a unit. This concludes the proof.
\end{proof}

\begin{proposition}  
We have $\cO_n^* = \cO_L^* \cap \cO_n$.
\end{proposition}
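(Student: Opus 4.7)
The inclusion $\cO_n^* \subseteq \cO_L^* \cap \cO_n$ is immediate: a unit of $\cO_n$ has an inverse that lies in $\cO_n \subseteq \cO_L$, so it is also a unit of $\cO_L$. The real content is the reverse inclusion, i.e.\ showing that any $u \in \cO_L^* \cap \cO_n$ has its inverse inside $\cO_n$, not merely inside $\cO_L$. The case $n=0$ is vacuous because $\cO_0 = \cO_L$, so I focus on $n \ge 1$.

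The plan is to compute $u^{-1}$ explicitly via the ``conjugation'' on $L$ induced by the quadratic relation $\Delta^2 = \tau_\Delta \Delta - \delta_\Delta$. The element $\bar\Delta := \tau_\Delta - \Delta$ is the other root of this quadratic, so it satisfies $\Delta + \bar\Delta = \tau_\Delta$ and $\Delta \bar\Delta = \delta_\Delta$, both elements of $\cO_K$. This works uniformly in all three cases: in the nonsplit case $\bar{\;}$ is the Galois conjugation, and in the split case it is the coordinate swap, but the algebraic identities are the same. For $u = a + bp^n\Delta \in \cO_n$, the conjugate is $\bar u = (a + bp^n\tau_\Delta) - bp^n \Delta$, and one checks
\begin{equation*}
    u\bar u = a^2 + abp^n\tau_\Delta + b^2 p^{2n}\delta_\Delta = N_{L/K}(u).
\end{equation*}

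By the previous proposition, the hypothesis $u \in \cO_L^*$ is equivalent to $a \in \cO_K^*$, which in turn forces $N_{L/K}(u) \in \cO_K^*$. Therefore I can write
\begin{equation*}
    u^{-1} = \frac{\bar u}{N_{L/K}(u)} = \frac{a + bp^n\tau_\Delta}{N_{L/K}(u)} \; - \; \frac{b}{N_{L/K}(u)}\, p^n\Delta.
\end{equation*}
Both coefficients lie in $\cO_K$ because $N_{L/K}(u)^{-1} \in \cO_K$; more importantly, the coefficient of $\Delta$ has the explicit factor $p^n$ already factored out. This exhibits $u^{-1}$ in the standard form $x + y\, p^n \Delta$ with $x, y \in \cO_K$, so $u^{-1} \in \cO_n$ and hence $u \in \cO_n^*$.

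There is no real obstacle here beyond keeping the split case straight: the only subtlety is that ``conjugation'' is not Galois conjugation when $L = K \times K$, but the quadratic relation $\Delta^2 = \tau_\Delta\Delta - \delta_\Delta$ (which holds by construction in all cases) makes the computation of $u\bar u$ and $u^{-1}$ go through identically. Once the inverse is written as $\bar u / N_{L/K}(u)$, the appearance of $p^n$ in the $\Delta$-coefficient is automatic, and the proof is essentially complete.
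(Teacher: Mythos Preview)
Your proof is correct and follows essentially the same approach as the paper: both reduce to the previous proposition (so that $a\in\cO_K^*$ and hence $N_{L/K}(u)\in\cO_K^*$) and then use the quadratic relation for $\Delta$ to show the inverse lies in $\cO_n$. The only difference is cosmetic: the paper writes the inverse as an unknown $x+y\Delta$, multiplies out, and reads off $p^n\mid y$ from the vanishing $\Delta$-coefficient, whereas you compute $u^{-1}=\bar u/N_{L/K}(u)$ explicitly and observe that the $p^n$ in front of $\Delta$ is already visible.
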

\begin{proof}
For $n= 0$ this is true because $\cO_0 = \cO_L$.

For $n\ge 1$, we clearly have $\cO_n^*\subseteq \cO_L^* \cap \cO_n$. We need to prove that if $a + bp^n\Delta$ is a unit of $\cO_L$, then its inverse also lies in $\cO_n$. The previous proposition implies $a$ is a unit of $\cO_K$. If $x + y\Delta$ is the inverse, then from the product equation 
\begin{equation}\label{inverseproduct}
    1 = (a + bp^n\Delta)(x + y\Delta) = (ax - bp^ny\delta_{\Delta}) + (ay + bp^nx + bp^ny\tau_{\Delta})\Delta,
\end{equation}
we deduce
\begin{equation*}
    ay + bp^{n}x + bp^{n}y\tau_{\Delta} = 0.
\end{equation*}
Hence $p^n|ay$. We deduce that $p^n|y$, because $a$ is a unit. That is, $x + y\Delta\in \cO_n$, as desired.
\end{proof}

Using the previous result we immediately get the following criterion, which we will use repeatedly:
\begin{theorem}
Let $n\ge 1$. Then $a + bp^n\Delta \in \cO_n$ is a unit if and only if $a$ is a unit of $\cO_K$.
\end{theorem}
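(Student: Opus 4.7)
The plan is extremely short: this theorem is essentially an immediate corollary of the two preceding propositions, so the proof will consist of chaining them together without introducing any new machinery.

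First I would observe that the previous proposition establishes $\cO_n^* = \cO_L^* \cap \cO_n$. In particular, for $x = a + bp^n\Delta \in \cO_n$, saying $x$ is a unit of $\cO_n$ is equivalent to saying $x$ is a unit of $\cO_L$, since $x \in \cO_n$ is automatic. Then I would invoke the proposition before that, which characterizes when $x = a + bp^n\Delta \in \cO_n$ is a unit of $\cO_L$: this holds if and only if $a \in \cO_K^*$. Stitching these two equivalences together yields the stated criterion.

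There is no real obstacle here; both ingredients are already proved. The only thing worth emphasizing in the write-up is that the hypothesis $n \ge 1$ is inherited from the first of the two propositions (the $n = 0$ case is vacuous since $\cO_0 = \cO_L$ and every element of $\cO_0$ has the form $a + b\Delta$ with no restriction coming from $p^n$). The theorem is then stated in the convenient form that will be used repeatedly in later sections, namely that unit-ness of an element of $\cO_n$ can be detected by a single valuation condition on its $\cO_K$-coordinate $a$, with no reference to $\cO_L$ or $N_{L/K}$ required.
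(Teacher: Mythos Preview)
Your proposal is correct and matches the paper's approach exactly: the paper states this theorem with only the one-line justification ``Using the previous result we immediately get the following criterion,'' relying on precisely the two preceding propositions you cite. There is nothing to add.
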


The following definition will become useful later. 
\begin{definition}\label{typedef}
Let $x\in \cO_0 = \cO_L$ be any element. We define its \textbf{type} by
\begin{equation*}
    \varepsilon(x) = 
    \left\{
	\begin{array}{ll}
		\valpi(x)  & \mbox{in the nonsplit case }  \\
		(\valp(x_1), \valp(x_2)) & \mbox{in the split case }  
	\end{array}
\right.
\end{equation*}
where in the split case we have $x = (x_1, x_2)$. We define a \textbf{possible type} as an element of the image of $\varepsilon$.
\end{definition}
We will consider $\varepsilon(x)$ as a vector of one coordinate in the nonsplit case and of two coordinates in the split case. We now use the previous definition to give the following
\begin{definition}\label{representativedef}
Let $I\subseteq \cO_n$ be an ideal. We say $x\in I$ is a \textbf{representative} of $I$ if $\varepsilon(x) = \valpi(x)$ is minimal among $x\in I$ in the nonsplit case, and if 
\begin{align*}
    x_1 &= \min_{(x, y)\in I}( \valp(x))\\
    x_2 &= \min_{(x, y)\in I}( \valp(y))\\
\end{align*}
in the split case.
\end{definition}
The type is registering the valuation at each coordinate of the elements of $\cO_L$ as elements of $K$ or $K\times K$. A representative of an ideal is just an element of the ideal that has minimum possible valuation at each coordinate \textit{simultaneously}.

\begin{proposition}\label{repsexist}
Let $n\ge 0$ and $I\subseteq \cO_n$ a rank $2$ ideal, then $I$ has a representative. Furthermore, all its representatives are nonzero divisors of $\cO_L$. 
\end{proposition}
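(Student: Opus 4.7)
The plan is to treat the nonsplit and split cases separately, since their underlying algebraic structures differ substantially. In the \textbf{nonsplit case} the argument is immediate: $\cO_L$ is a discrete valuation ring with valuation $\valpi$, so for a rank $2$ (hence nonzero) ideal $I$ the set $\{\valpi(x) : x \in I \setminus \{0\}\}$ is a nonempty subset of $\mathbb{Z}_{\ge 0}$ and therefore attains its minimum. Any element realizing this minimum is a representative, and since $\cO_L$ is a domain, it is automatically a nonzero divisor. The rank $2$ hypothesis is used only to guarantee $I \neq 0$.

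The \textbf{split case} $\cO_L = \cO_K \times \cO_K$ is the substantive one. I would set
\[
  m_1 = \min_{(x_1, x_2) \in I} \valp(x_1), \qquad m_2 = \min_{(x_1, x_2) \in I} \valp(x_2),
\]
and first check that both minima are finite: if every element of $I$ had its first coordinate equal to $0$, then $I \subseteq \{0\} \times \cO_K$ would have $\cO_K$-rank at most $1$, contradicting the rank $2$ hypothesis, and similarly for the second coordinate. Next, pick $\xi = (a, b') \in I$ realizing $m_1$ in the first coordinate and $\eta = (a', b) \in I$ realizing $m_2$ in the second; by definition of the minima, automatically $\valp(b') \ge m_2$ and $\valp(a') \ge m_1$. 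If $\valp(b') = m_2$ then $\xi$ is already a representative; if $\valp(a') = m_1$ then $\eta$ is. In the remaining case both of these inequalities are strict, and the strict-inequality rule $\valp(u + v) = \min(\valp(u), \valp(v))$ when $\valp(u) \neq \valp(v)$ yields $\valp(a + a') = m_1$ and $\valp(b + b') = m_2$, so $\xi + \eta \in I$ is a representative. For the second assertion, any representative $x = (x_1, x_2)$ satisfies $\valp(x_i) = m_i < \infty$, so both coordinates are nonzero and $x$ is a nonzero divisor of $\cO_K \times \cO_K$.

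The main step requiring care is the three-way case analysis in the split case. What makes it go through is that only the additive structure of $I$ is needed, combined with the strict-inequality rule for valuations, which ensures that the ``wrong'' coordinates of $\xi$ and $\eta$ are strictly dominated by the minima realized in the other summand and therefore drop out of the sum. The finiteness of $m_1$ and $m_2$ is a short separate observation about $\cO_K$-rank, and everything else is bookkeeping.
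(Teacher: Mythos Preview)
Your proof is correct and follows essentially the same approach as the paper: the nonsplit case is immediate from the valuation being $\mathbb{Z}_{\ge 0}$-valued, and in the split case you pick elements realizing each coordinate minimum separately and, when neither realizes both, add them together using the strict form of the ultrametric inequality. The only cosmetic difference is that you verify finiteness of $m_1, m_2$ at the outset while the paper records it at the end.
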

\begin{proof}
This is obvious in the nonsplit case because $\varepsilon(x) = \valpi(x)$ takes values in the nonnegative integers. We now address the split case.

Define $(x_1, y_1)\in I$ as one element such that
\begin{equation*}
    x_1 = \min_{(x, y)\in I}(\valp(x)),
\end{equation*}
and $(x_2, y_2)\in I$ as one element such that
\begin{equation*}
    y_2 = \min_{(x, y)\in I}(\valp(y)).
\end{equation*}
Both of these elements exists since, along a single coordinate, minimums of the valuations exist. 

If either of these elements has the other coordinate also a minimum then we are done. Otherwise we must have
\begin{equation*}
    \valp(x_2) > \valp(x_1), \valp(y_1) > \valp(y_2).
\end{equation*}
In this situation, define $(x_0, y_0) = (x_1 + x_2, y_1 + y_2)$. Since $I$ is closed under addition, we have $(x_0, y_0)\in I$. Furthermore,
\begin{equation*}
    \valp(x_0) = \valp(x_1 + x_2) = \min(\valp(x_1), \valp(x_2)) = \valp(x_1) = \min_{(x, y)\in I}(\valp(x)).
\end{equation*}
Notice we have used that the equality of the triangle inequality for valuations happens if the terms involved have different valuations, as is our case. Analogously,
\begin{equation*}
    \valp(y_0) = \min_{(x, y)\in I}(\valp(y)),
\end{equation*}
and this proves $(x_0, y_0)$ satisfies what we are looking for.

Furthermore, these valuations are nonnegative integers (as opposed to $\infty$) since otherwise the ideal would have all of its elements with $0$'s in the same coordinate. This would imply it does not have rank $2$, which contradicts our choice of $I$.
\end{proof}
Now we justify the name representative:
\begin{proposition}\label{reprentatives}
Let $n\ge 0$. For every rank $2$-ideal $I\subseteq \cO_n$ and every representative $x\in I$, there exists $0\le i \le n$ such that
\begin{equation*}
    I = x\cO_i.
\end{equation*}
Furthermore, $i$ only depends on $I$. 
\end{proposition}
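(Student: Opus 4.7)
The plan is to prove the double inclusion $x\cO_n\subseteq I\subseteq x\cO_0$ and then apply Proposition \ref{onmodulesunsplit} to the $\cO_n$-module $x^{-1}I$.

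First I would observe that a representative $x$ is invertible in $L$. In the nonsplit case, $L$ is a field and $x\neq 0$ since $\varepsilon(x)=\valpi(x)$ is a nonnegative integer; in the split case, Proposition \ref{repsexist} guarantees that $x=(x_1,x_2)$ is a nonzero divisor, so both components are nonzero in $K$ and $x$ is a unit of $L=K\times K$. The first inclusion $x\cO_n\subseteq I$ is then immediate from $x\in I$ and the fact that $I$ is an ideal of $\cO_n$. For the second inclusion, take any $y\in I$ and form $y/x\in L$: in the nonsplit case $\valpi(y)\ge\valpi(x)$ by the minimality defining a representative, so $y/x\in\cO_L=\cO_0$; in the split case the componentwise minimality in Definition \ref{representativedef} forces $\valp(y_j)\ge\valp(x_j)$ for $j=1,2$, so again $y/x\in\cO_K\times\cO_K=\cO_0$. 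Thus $I\subseteq x\cO_0$.

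Multiplying the resulting chain by $x^{-1}\in L$ gives $\cO_n\subseteq x^{-1}I\subseteq\cO_0$, and $x^{-1}I$ inherits an $\cO_n$-module structure because multiplication by the fixed element $x^{-1}\in L$ is $\cO_n$-linear. Proposition \ref{onmodulesunsplit} then forces $x^{-1}I=\cO_i$ for a unique $0\le i\le n$, giving $I=x\cO_i$. The main potential obstacle, namely that $\cO_L$ is not a discrete valuation ring (nor even a domain) in the split case, has already been absorbed into the two ingredients I use: the existence of representatives (Proposition \ref{repsexist}) and the classification of the intermediate $\cO_n$-modules (Proposition \ref{onmodulesunsplit}).

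For the last assertion, suppose $x$ and $x'$ are two representatives of the same rank-$2$ ideal $I$, and write $I=x\cO_i=x'\cO_j$. By the very definition of representative, $\varepsilon(x)=\varepsilon(x')$ (both equal the coordinate-wise minimum type of $I$), so $u:=x'x^{-1}\in L$ has zero $\pi$-valuation in the nonsplit case and both components of zero $p$-valuation in the split case; consequently $u\in\cO_L^*$. The equality $\cO_i=u\cO_j$ then shows that multiplication by $u$ restricts to an $\cO_K$-linear isomorphism between $\cO_j$ and $\cO_i$ inside $\cO_L$; since $u$ is a unit of $\cO_L$, this same multiplication is an $\cO_K$-module automorphism of $\cO_L$, and therefore preserves the index in $\cO_L$. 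Hence $[\cO_L:\cO_i]=[\cO_L:\cO_j]$, and using the elementary fact $[\cO_L:\cO_k]=q^k$ (visible from $\cO_L/\cO_k\cong\cO_K/p^k\cO_K$ via the second coordinate in the decomposition $a+bp^k\Delta$), we conclude $i=j$.
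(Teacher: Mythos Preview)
Your existence argument is essentially identical to the paper's: both establish the sandwich $x\cO_n\subseteq I\subseteq x\cO_0$ from the definition of representative, divide by $x$, and invoke Proposition~\ref{onmodulesunsplit}. Your uniqueness argument, however, takes a different route. The paper argues as follows: given $I=x\cO_i=x'\cO_j$ with $i\le j$, one writes $x=x'o_j$ and $x'=xo_i$ with $o_i\in\cO_i$, $o_j\in\cO_j\subseteq\cO_i$; then $o_io_j=1$ forces $o_j\in\cO_i^*$, whence $I=x'o_j\cO_i=x'\cO_i$, contradicting the uniqueness of the index attached to the fixed representative $x'$ unless $i=j$. You instead observe directly from Definition~\ref{representativedef} that any two representatives share the same type, so their ratio $u$ lies in $\cO_L^*$; the equality $\cO_i=u\cO_j$ and the index computation $[\cO_L:\cO_k]=q^k$ then give $i=j$. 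Both are correct; your version is a touch more direct because it exploits the built-in fact that $\varepsilon$ is constant on representatives, avoiding the small detour through units of $\cO_i$.
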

\begin{proof}
The argument is the same for both split and nonsplit cases but we only write it for the split case.

Let $I\in\cI_n$.  The previous proposition implies there is a representative element $(x_0, y_0)\in I$ with
 \begin{align*}
    x_0 &= \min_{(x, y)\in I}(\valp(x)),\\
    y_0 &= \min_{(x, y)\in I}(\valp(y)). 
\end{align*}
For any element $(x, y)\in I$ consider 
 \begin{equation*}
     (z_1, z_2) = (xx^{-1}_0, yy^{-1}_0).
 \end{equation*}
Taking valuations we get
 \begin{align*}
     \valp(z_1) &= \valp(x) - \valp(x_0)\ge 0,\\
     \valp(z_2) &= \valp(y) - \valp(y_0)\ge 0,
 \end{align*}
 by definition of $(x_0, y_0)$. This implies
 \begin{equation*}
     (z_1, z_2)\in \cO_0,
 \end{equation*}
which is the same as saying
\begin{equation*}
    I\subseteq (x_0, y_0)\cO_0.
\end{equation*}
Because $I$ is an ideal, we also have
\begin{equation*}
    (x_0, y_0)\cO_n\subseteq I.
\end{equation*}
Hence, 
\begin{equation*}
    \cO_n\subseteq (x_0^{-1}, y_0^{-1})I \subseteq \cO_0.
\end{equation*}
Notice we can take the inverses since the entries are nonzero.
By proposition \ref{onmodulesunsplit}, we deduce there exists a unique $i$, with respect to $(x_0, y_0)$, such that
\begin{equation*}
    (x_0^{-1}, y_0^{-1})I = \cO_i,
\end{equation*}
that is, $I = (x_0, y_0)\cO_i$.

If $(x_1, y_1)$ and $(x_2, y_2)$ are two representatives for the same ideal with
\begin{equation*}
   I = (x_1, y_1)\cO_i = (x_2, y_2)\cO_j.
\end{equation*}
Without loss of generality suppose $i \le j$. There exists $o_i\in \cO_i$ and $o_j\in \cO_j$ with 
\begin{align*}
    (x_1, y_1) &= (x_2, y_2)o_j,\\
    (x_2, y_2) &= (x_1, y_1)o_i.
\end{align*}
This implies $o_io_j = \mathbf{1}$.
Since $\cO_j\subseteq \cO_i$ we have $o_j\in \cO_i$, which means that $o_j$ and $o_i$ are units of $\cO_i$. Hence
\begin{equation*}
    I = (x_1, y_1)\cO_i = (x_2, y_2)o_j\cO_i =  (x_2, y_2)\cO_i.
\end{equation*}
This implies $i = j$ for otherwise it contradicts the unicity of the index for the representative $(x_2, y_2)$.
\end{proof}
\begin{remark}
    The previous proof is classical in the theory of orders within discrete valuation rings. In the split case, $\cO_L$ is not a discrete valuation ring. In order to be able to carry out the argument, we needed to be able to minimize the valuation entrywise simultaneously. This is why we need the existence of representatives, guaranteed by proposition \ref{repsexist} above.
\end{remark}
We are now in position to prove the main structural result of the ideals in the main sequence of orders.
\begin{proposition}
Let $n\ge 1$ and $I\in \cI_n$ be a nonprincipal ideal. Then there exists a unique ideal $J\in \cI_{n - 1}$ such that $I = pJ$.
\end{proposition}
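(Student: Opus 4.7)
The plan is to exploit Proposition \ref{reprentatives} together with the characterization of nonprincipal ideals through the index $i$. First, by Proposition \ref{repsexist} I pick a representative $x$ of $I$, and by Proposition \ref{reprentatives} I write $I = x\cO_i$ for a unique $i$ with $0 \leq i \leq n$. A preliminary observation is that $I$ is nonprincipal if and only if $i \leq n-1$: if $I = z\cO_n$ is principal with $z \in \cO_n$, then $z$ itself is a representative (every element of $z\cO_n$ has valuation no smaller than that of $z$), so unicity of $i$ forces $i = n$; conversely, $i = n$ gives $I = x\cO_n$ with $x \in I \subseteq \cO_n$, which is principal.

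Second, I would show that the representative $x$ is divisible by $p$ in $\cO_0 = \cO_L$. Writing $x = a + bp^n\Delta$ with $a, b \in \cO_K$, I consider $x \cdot p^i\Delta$, which lies in $I \subseteq \cO_n$ since $p^i\Delta \in \cO_i$. Expanding using $\Delta^2 = \tau_{\Delta}\Delta - \delta_{\Delta}$,
\begin{equation*}
    x \cdot p^i\Delta = -bp^{n+i}\delta_{\Delta} + (ap^i + bp^{n+i}\tau_{\Delta})\Delta.
\end{equation*}
For this to lie in $\cO_n = \cO_K + p^n\Delta\cO_K$, its $\Delta$-coefficient must belong to $p^n\cO_K$, which forces $a \in p^{n-i}\cO_K$. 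Since $i \leq n-1$ one has $p \mid a$, and since $n \geq 1$ one also has $p \mid bp^n\Delta$ in $\cO_0$. Thus $x = px'$ with $x' := (a/p) + bp^{n-1}\Delta \in \cO_{n-1}$.

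Third, I set $J := x'\cO_i$. Because $\cO_{n-1} \subseteq \cO_i$ (as $i \leq n-1$), $J$ is automatically an $\cO_{n-1}$-submodule of $\cO_0$. I would then check $J \subseteq \cO_{n-1}$ as follows: for $y = x'z \in J$ with $z \in \cO_i$, one has $py = xz \in x\cO_i = I \subseteq \cO_n$, and a direct check (writing $y = u + v\Delta$ with $u, v \in \cO_K$ and noting $py \in \cO_n$ forces $pv \in p^n\cO_K$, hence $v \in p^{n-1}\cO_K$) shows $y \in \cO_{n-1}$. Then $J$ is an ideal of $\cO_{n-1}$ of $\cO_K$-rank $2$ (rank $2$ since $pJ = I$ has rank $2$), and by construction $pJ = p(x'\cO_i) = x\cO_i = I$.

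Finally, uniqueness is immediate: if $I = pJ_1 = pJ_2$ with $J_1, J_2 \in \cI_{n-1}$, injectivity of multiplication by $p$ on $L$ forces $J_1 = J_2$. I expect the step requiring the most care to be the divisibility $p \mid x$, since that is where the hypothesis $i \leq n-1$ is used and where the unique expression $a + bp^n\Delta$ for elements of $\cO_n$ is crucial. Fortunately, the computation is uniform across the ramified, unramified and split cases, because the relation $\Delta^2 = \tau_{\Delta}\Delta - \delta_{\Delta}$ and the form of elements of $\cO_n$ both hold in all three settings; no case-by-case analysis is needed once Propositions \ref{repsexist} and \ref{reprentatives} supply the representative.
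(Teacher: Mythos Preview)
Your proof is correct and follows essentially the same route as the paper: both use Proposition~\ref{reprentatives} to write $I = x\cO_i$ with $i < n$, show that $x$ is divisible by $p$, and take $J = p^{-1}I$. The only difference is that the paper invokes the conductor formula of Proposition~\ref{conductor} to obtain $x \in (\cO_n:\cO_i) = p^{n-i}\cO_i$ in one stroke, whereas you recompute the relevant divisibility $p^{n-i}\mid a$ by hand via $x\cdot p^i\Delta \in \cO_n$ and then verify $J\subseteq \cO_{n-1}$ directly.
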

\begin{proof}
Let $I\in\cI_n$ be a nonprincipal ideal. By proposition \ref{reprentatives}, we have $I = x\cO_i$ for some representative $x\in I$ and a unique $0\le i \le n$. We know $i < n$, because $I$ is nonprincipal. In particular, since
\begin{equation*}
    x\cO_i \subseteq \cO_n,
\end{equation*}
$x$ lies in the conductor $(\cO_n: \cO_i)$. By proposition \ref{conductor}, we know $(\cO_n: \cO_i) = p^{n - i}O_i$. Write $x = p^{n - i}y$, for some $y\in \cO_i$. Substituting this in the expression of $I$, we get
\begin{equation*}
    I = x\cO_i = p(p^{n-i-1}y\cO_i).
\end{equation*}
Let $J := (p^{n-i-1}y\cO_i)$. Every ideal of $\cO_i$ that lies in $\cO_{n-1}$ is also an ideal of $\cO_{n-1}$. This implies $J$ is an $\cO_{n - 1}$-ideal. This concludes the proof of the existence. To notice it is unique, realize that $J = p^{-1}I$, which as a set is uniquely determined by $I$.
\end{proof}
From all above we immediately deduce
\begin{theorem}\label{travelingmap}
The traveling map $T_n$ is a bijection onto its image, which consists exactly of the nonprincipal rank $2$ ideals in $\cI_{n+1}$.
\end{theorem}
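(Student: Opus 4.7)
The plan is to break the theorem into three sub-claims: (1) $T_n$ is injective; (2) every nonprincipal rank-$2$ ideal of $\cO_{n+1}$ lies in the image of $T_n$; and (3) no principal ideal of $\cO_{n+1}$ lies in the image. Together these yield a bijection from $\cI_n$ onto the set of nonprincipal rank-$2$ ideals of $\cO_{n+1}$.

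Sub-claim (1) is immediate, since multiplication by $p$ is injective on subsets of $\cO_L$: in the nonsplit case $\cO_L$ is a domain, and in the split case $p = (p,p)$ has both coordinates nonzero, so $p$ is a nonzero-divisor in either setting. Sub-claim (2) is exactly the preceding proposition applied with $n+1$ in place of $n$: every nonprincipal $I \in \cI_{n+1}$ has the form $pJ$ for some (unique) $J \in \cI_n$, i.e.\ $I = T_n(J)$.

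The main obstacle is sub-claim (3), which is where the constraint $i \le n$ must be activated. I would argue by contradiction: assume $J \in \cI_n$ and $pJ = x\cO_{n+1}$. By Proposition \ref{reprentatives}, write $J = y\cO_i$ for a representative $y$ (which is a nonzero-divisor of $\cO_L$ by Proposition \ref{repsexist}) and the unique index $0 \le i \le n$, so $pJ = (py)\cO_i$ with $py$ a nonzero-divisor. The two inclusions in $(py)\cO_i = x\cO_{n+1}$ provide $v \in \cO_i$ and $u \in \cO_{n+1}$ with $x = (py)v$ and $py = xu$; substituting and cancelling the nonzero-divisor $py$ gives $vu = 1$. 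The characterization $\cO_{n+1}^* = \cO_L^* \cap \cO_{n+1}$ (proven earlier) then forces $u \in \cO_{n+1}^*$ and $v = u^{-1} \in \cO_{n+1}^*$, so $x\cO_{n+1} = (py)v\cO_{n+1} = (py)\cO_{n+1}$. Combining this with $x\cO_{n+1} = (py)\cO_i$ and dividing by $py \in L^*$ yields $\cO_i = \cO_{n+1}$, i.e.\ $i = n+1$, contradicting $i \le n$. This closes (3) and completes the plan.
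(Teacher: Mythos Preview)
Your proof is correct and follows the approach the paper leaves implicit (the paper states only that the theorem is ``immediately deduced'' from the preceding results, so your three sub-claims are exactly the unpacking it intends). In particular, your argument for sub-claim~(3) via the unit relation $vu=1$ cleanly supplies the one detail the paper omits---namely, why $pJ$ can never be principal in $\cO_{n+1}$---without needing to verify that $py$ is itself a representative of $pJ$.
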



\section{Study of the Principal Part}\label{studyoftheprincipalpart}

We begin by giving the following
\begin{definition}
For $n\ge 0$ we define the \textbf{zeta function} of the order $\cO_n$, following definition \ref{zetafunction} in page \pageref{zetafunction}, by
\begin{equation*}
    \zeta_n(s) := \displaystyle\sum_{I\subseteq \cO_n}\dfrac{1}{[\cO_n: I]^s},
\end{equation*}
where the sum runs over all ideals $I\subseteq \cO_n$ of finite index, that is, over the ideals in $\cI_n$.

Furthermore, we denote by $\fP_n$ the set of principal ideals in $\cI_n$ and define
\begin{equation*}
    \zeta_n^P(s) := \displaystyle\sum_{I\in \fP_n}\dfrac{1}{[\cO_n: I]^s},
\end{equation*}
which we call \textbf{the principal part} of $\zeta_n(s)$.
\end{definition}

The purpose of the traveling map is to isolate the principal part of $\zeta_n$ away from the zeta function $\zeta_{n-1}$. More precisely, we have

\begin{theorem}\label{recurrencestatement}
For $n\ge 1$, the zeta functions of the orders $\cO_n$ satisfy the recurrence relation
\begin{equation*}
    \zeta_n(s) = \zeta_n^P(s) + q^{-s}\zeta_{n-1}(s).
\end{equation*}
\end{theorem}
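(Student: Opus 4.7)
The plan is to split $\zeta_n(s)$ into its principal and nonprincipal parts, parametrize the nonprincipal ideals via the traveling map, and then show that this map scales the index by exactly $q$, which will produce the factor $q^{-s}$.

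First I would write
\begin{equation*}
\zeta_n(s) = \zeta_n^P(s) + \sum_{I \in \cI_n \setminus \fP_n} \frac{1}{[\cO_n : I]^s}.
\end{equation*}
By Theorem \ref{travelingmap}, the traveling map $T_{n-1} : \cI_{n-1} \to \cI_n$ is a bijection onto $\cI_n \setminus \fP_n$, with $T_{n-1}(J) = pJ$. Reindexing the sum over nonprincipal ideals by $J \in \cI_{n-1}$ reduces the problem to computing $[\cO_n : pJ]$ in terms of $[\cO_{n-1} : J]$.

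The key intermediate step is the index identity $[\cO_n : pJ] = q\,[\cO_{n-1} : J]$. To prove this, I would use the tower $pJ \subseteq p\cO_{n-1} \subseteq \cO_n$ (the right inclusion holds because $p\cO_{n-1}\subseteq \cO_n$ by inspection of the definition of $\cO_n$). Multiplicativity of indices gives
\begin{equation*}
[\cO_n : pJ] = [\cO_n : p\cO_{n-1}] \cdot [p\cO_{n-1} : pJ].
\end{equation*}
Since multiplication by $p$ is injective on $\cO_L$ (and in particular on the rank-$2$ $\cO_K$-module $\cO_{n-1}$), we have $[p\cO_{n-1} : pJ] = [\cO_{n-1} : J]$. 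To evaluate $[\cO_n : p\cO_{n-1}]$, I would use corollary \ref{quotientequality} (or directly $[\cO_L : \cO_m] = q^m$) together with the fact that $\cO_{n-1}$ is a free $\cO_K$-module of rank $2$, so $[\cO_{n-1} : p\cO_{n-1}] = q^2$. Then
\begin{equation*}
[\cO_n : p\cO_{n-1}] = \frac{[\cO_L : p\cO_{n-1}]}{[\cO_L : \cO_n]} = \frac{[\cO_L : \cO_{n-1}] \cdot [\cO_{n-1} : p\cO_{n-1}]}{[\cO_L : \cO_n]} = \frac{q^{n-1} \cdot q^2}{q^n} = q.
\end{equation*}

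Combining these facts, $[\cO_n : pJ]^{-s} = q^{-s}\,[\cO_{n-1} : J]^{-s}$, and summing over $J \in \cI_{n-1}$ yields
\begin{equation*}
\sum_{I \in \cI_n \setminus \fP_n} \frac{1}{[\cO_n : I]^s} = q^{-s} \sum_{J \in \cI_{n-1}} \frac{1}{[\cO_{n-1} : J]^s} = q^{-s} \zeta_{n-1}(s),
\end{equation*}
which gives the desired recurrence. I do not expect a serious obstacle here, since Theorem \ref{travelingmap} already does the hard structural work; the only place that needs care is the index computation for $[\cO_n : p\cO_{n-1}]$, and this has to be done uniformly across the split and nonsplit cases, which is ensured by the fact that $\cO_{n-1}$ is always a free $\cO_K$-module of rank $2$ regardless of the type of $L/K$.
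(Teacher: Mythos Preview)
Your proof is correct and follows essentially the same approach as the paper: split into principal and nonprincipal parts, reindex the nonprincipal sum via the traveling map of Theorem~\ref{travelingmap}, and verify $[\cO_n:pJ]=q\,[\cO_{n-1}:J]$. The only cosmetic difference is that the paper computes this index via the chain $pJ\subseteq\cO_n\subseteq\cO_{n-1}$ together with $[J:pJ]=q^2$ and $[\cO_{n-1}:\cO_n]=q$, whereas you use the chain $pJ\subseteq p\cO_{n-1}\subseteq\cO_n$; both are equally valid and uniform across the split and nonsplit cases.
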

\begin{proof}
It is clear that
\begin{equation*}
    \zeta_n(s) = \zeta_n^P(s) + \zeta^i_n(s),
\end{equation*}
where we define
\begin{equation*}
    \zeta^i_n(s) := \displaystyle\sum_{I\notin \fP_n} \dfrac{1}{[\cO_n:I]^s} .
\end{equation*}
If $J\subseteq \cO_{n - 1}$ then $pJ\subseteq \cO_n \subseteq \cO_{n-1}$ and $pJ\subseteq J \subseteq \cO_{n-1}$. 
Using the tower theorem for indices we get
\begin{equation*}
    [\cO_n:pJ] =\dfrac{ [\cO_{n-1}:pJ]}{[\cO_{n-1}:\cO_n]} = \dfrac{ [\cO_{n-1}:J][J:pJ]}{[\cO_{n-1}:\cO_n]}
\end{equation*}
We know that $[J:pJ] = q^2$ and $[\cO_{n-1}:\cO_n]=q$. Both of these equalities follow from elementary divisor theory. We then deduce
\begin{equation*}
   \zeta^i_n(s)     = \displaystyle\sum_{I\notin \fP_n} \dfrac{1}{[\cO_n:I]^s}
                     =\displaystyle\sum_{J\subseteq \cO_{n-1}} \dfrac{1}{q^s[\cO_{n-1}:J]^s}
                    = q^{-s}\zeta_{n-1}(s),
\end{equation*}
where we used the traveling map in the second equality.
\end{proof}

With this at hand the challenge becomes to find a usable expression for the principal part. In this direction we introduce the following

\begin{definition}
For $I\in\fP_n$ we denote by
\begin{equation*}
    R(I) := \{\alpha\in \cO_n \;\mid\; \alpha \cO_n = I\},
\end{equation*}
its set of representatives.
\end{definition}
We will organize ideals by the type their representatives have. Because different representatives of an ideal differ by a unit, we immediately get
\begin{proposition}\label{typeindeprep}
For $I\in\fP_n$ and $\alpha\in R(I)$, the type of $\varepsilon(\alpha)$ is independent of $\alpha$.
\end{proposition}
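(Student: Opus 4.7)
The plan is to exploit that any two elements of $R(I)$ differ by a unit of $\cO_n$, and that units of $\cO_n$ preserve the type $\varepsilon$ under multiplication.

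First, if $\alpha,\beta\in R(I)$ then $\alpha\cO_n = \beta\cO_n$, so there exist $u,v\in\cO_n$ with $\alpha = \beta u$ and $\beta = \alpha v$. Substituting one into the other yields $\alpha(1-vu) = 0$. In the nonsplit case $\cO_L$ is an integral domain, so we cancel to get $uv = 1$, i.e.\ $u \in \cO_n^*$. In the split case $\cO_L = \cO_K\times\cO_K$ is not a domain, but since $I$ has $\cO_K$-rank $2$, any generator $\alpha = (\alpha_1,\alpha_2)$ of $I$ must have both coordinates nonzero (otherwise $\alpha\cO_n$ would be contained in a rank-$1$ summand of $\cO_L$, contradicting $I\in\cI_n$). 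Thus $\alpha$ is a nonzero divisor and we can cancel in the same way to get $u\in\cO_n^*$.

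Next I would use the earlier characterization $\cO_n^* = \cO_L^*\cap\cO_n$ to conclude that $u\in\cO_L^*$. In the nonsplit case this means $\valpi(u)=0$, and from $\alpha = \beta u$ the additivity of $\valpi$ gives $\varepsilon(\alpha) = \valpi(\beta) + \valpi(u) = \valpi(\beta) = \varepsilon(\beta)$. In the split case $u = (u_1,u_2)$ with each $u_i\in\cO_K^*$, hence $\valp(u_i)=0$, and coordinatewise $\varepsilon(\alpha) = (\valp(\beta_1)+\valp(u_1),\,\valp(\beta_2)+\valp(u_2)) = \varepsilon(\beta)$.

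The only point that requires care is the cancellation step in the split case, where $\cO_L$ fails to be a domain; the rank-$2$ hypothesis on $I$ is what rescues it by forcing the coordinates of any generator to be nonzero, turning it into a nonzero divisor. Everything else is the formal statement that valuations turn multiplication into addition and vanish on units.
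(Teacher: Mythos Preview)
Your proposal is correct and follows exactly the approach the paper indicates: immediately before the proposition the paper writes ``Because different representatives of an ideal differ by a unit, we immediately get'' and gives no further argument. You have simply spelled out the details---in particular the cancellation in the split case via the rank-$2$ hypothesis forcing both coordinates of a generator to be nonzero---that the paper suppresses.
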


We now enhance the definition \ref{typedef} of type, given in page \pageref{typedef} above, to also apply for ideals in $\fP_n$. We have
\begin{definition}
Let $I\in\fP_n$. We define the \textbf{type of $I$} by
\begin{equation*}
    \varepsilon(I):= \varepsilon(x),
\end{equation*}
for any $x\in R(I)$.  We also define for every $x\in \cO_L$
\begin{equation*}
    \eta(x) = \min\{\valp(a) \mid a \mbox{ is a coordinate of } \varepsilon(x)\},
\end{equation*}
and write $\eta(I) = \eta(x)$ for any $x\in R(I)$. Notice that proposition \ref{typeindeprep} implies these definitions are well defined. 
\end{definition}
We now divide principal ideals into two different classes, in which the contribution to the zeta function is similar but not quite the same. We have
\begin{definition}
Let $\omega$ be a possible type. In the nonsplit case, we say $\omega$ is a \textbf{low type} if $\omega < ne$. In the split case, we say $\omega = (\omega_1, \omega_2)$ is a \textbf{low type} if $\omega_1 < n$ or $\omega_2 < n$.
Otherwise, we say $\omega$ is a \textbf{high type}. 
\end{definition}

\begin{definition}
Let $I\in\fP_n$. We say $I$ is a \textbf{low ideal} if $\varepsilon(I)$ is a low type. That is, if
\begin{equation*}
    \eta(I) < \left\{
	\begin{array}{ll}
		ne & \mbox{in the nonsplit case, }  \\
		n & \mbox{in the split case. }  
	\end{array}
\right.
\end{equation*}
Otherwise, we say $I$ is a \textbf{high ideal}. We will refer to $ne$ and $n$, respectively in the nonsplit and split cases, as the \textbf{ideal threshold} and denote if uniformly by $t_n$.
\end{definition}
The role of low and high ideals will become clearer as we move forward. For the moment, let us only say that high ideals contribute in all possible ways available, while low ideals have a more restricted contribution to the principal part of the zeta function.

We begin by showing what is the contribution of each ideal in $\fP_n$.  We have the following
\begin{definition}
Let $\omega$ be a possible type of an ideal in $\fP_n$. We define its \textbf{ type contribution} as
\begin{equation*}
    c(\omega):= \left\{
	\begin{array}{ll}
		f\omega & \mbox{in the nonsplit case, }  \\
		\omega_1 + \omega_2 & \mbox{in the split case. }  
	\end{array}
\right.
\end{equation*}
In here we are denoting $\omega = (\omega_1, \omega_2)$ in the split case, where the possible types are pairs of nonnegative integers.
\end{definition}
With this definition, the following becomes a straightforward application of the tower theorem for indices. We leave the details to the reader.
\begin{proposition}\label{contribution}
Let $I\in\fP_n$. The index $[\cO_n:I]$ is given by
\begin{equation*}
    [\cO_n:I] = q^{-c(\varepsilon(I))}.
\end{equation*}
\end{proposition}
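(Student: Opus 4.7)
The plan is to reduce the computation of $[\cO_n:\alpha\cO_n]$, for $\alpha\in R(I)$, to the computation of $[\cO_L:\alpha\cO_L]$, which can be read off directly from $\varepsilon(\alpha)=\varepsilon(I)$.

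First I would fix a representative $\alpha\in R(I)$ and compute $[\cO_L:\alpha\cO_L]$ by cases. In the nonsplit case, $\cO_L$ is a discrete valuation ring with uniformizer $\pi$ and residue field of size $Q=q^f$; if $\valpi(\alpha)=\omega$, then $\alpha\cO_L=\pi^{\omega}\cO_L$ and the index is $Q^\omega=q^{f\omega}=q^{c(\varepsilon(I))}$. In the split case $\cO_L=\cO_K\times\cO_K$, and if $\alpha=(\alpha_1,\alpha_2)$ with $\valp(\alpha_i)=\omega_i$, then $\alpha\cO_L=p^{\omega_1}\cO_K\times p^{\omega_2}\cO_K$, whose index in $\cO_L$ is $q^{\omega_1+\omega_2}=q^{c(\varepsilon(I))}$. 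In both situations one obtains $[\cO_L:\alpha\cO_L]=q^{c(\varepsilon(I))}$.

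The next step is to relate this to the index $[\cO_n:\alpha\cO_n]$. By Proposition \ref{repsexist}, $\alpha$ is a nonzero divisor of $\cO_L$, so multiplication by $\alpha$ is an injective $\cO_K$-linear map $\cO_L\to\cO_L$ that sends $\cO_n$ to $\alpha\cO_n$. Passing to quotients, it induces a group isomorphism
\begin{equation*}
\cO_L/\cO_n\;\xrightarrow{\;\cdot\alpha\;}\;\alpha\cO_L/\alpha\cO_n,
\end{equation*}
so in particular $[\alpha\cO_L:\alpha\cO_n]=[\cO_L:\cO_n]$.

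Finally, applying the tower theorem for indices along the two chains $\alpha\cO_n\subseteq\cO_n\subseteq\cO_L$ and $\alpha\cO_n\subseteq\alpha\cO_L\subseteq\cO_L$ yields
\begin{equation*}
[\cO_L:\cO_n]\cdot[\cO_n:\alpha\cO_n]\;=\;[\cO_L:\alpha\cO_n]\;=\;[\cO_L:\alpha\cO_L]\cdot[\alpha\cO_L:\alpha\cO_n],
\end{equation*}
and cancelling $[\cO_L:\cO_n]=[\alpha\cO_L:\alpha\cO_n]$ delivers $[\cO_n:\alpha\cO_n]=q^{c(\varepsilon(I))}$, as required. The only place that needs care is the split case, where $\cO_L$ is not a domain; the verification that multiplication by $\alpha$ is injective on $\cO_L$ is exactly where the nonzero-divisor conclusion of Proposition \ref{repsexist} is used, but no genuine obstacle remains beyond that.
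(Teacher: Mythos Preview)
Your argument is correct and is exactly the ``straightforward application of the tower theorem for indices'' that the paper invokes while leaving the details to the reader; reducing to $[\cO_L:\alpha\cO_L]$ and then comparing the two towers $\alpha\cO_n\subseteq\cO_n\subseteq\cO_L$ and $\alpha\cO_n\subseteq\alpha\cO_L\subseteq\cO_L$ is the natural way to fill in those details. Note that the printed statement has a sign slip in the exponent: the index is $q^{c(\varepsilon(I))}$, which is what you prove and what is used throughout the rest of the section.
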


Let $\omega$ be a possible type. In the nonsplit case $\omega$ is a nonnegative integer, while in the split case it is a pair of nonnegative integers. We define 
\begin{equation*}
   X_{\omega} :=  \{I\in\fP_n \mid\; \varepsilon(I) = \omega\}. 
\end{equation*}
Each ideal in $X_{\omega}$ contributes the same to the zeta function and grouping these terms together we deduce the principal part of the zeta function is
\begin{equation*}
    \zeta_n^P(s) = \displaystyle\sum_{\omega}\dfrac{| X_{\omega}|}{q^{c(\omega)}}.
\end{equation*}
The success of our analysis depends on whether we are able to compute the $|X_{\omega}|$ or not. 

The first step is to deal with the high types. The following proposition explains why this distinction is important.
\begin{proposition}\label{highabsorption}
Let $x\in \cO_0 = \cO_L$ be any element with $\eta(x) \ge t_n$, then $x\in \cO_n$.
\end{proposition}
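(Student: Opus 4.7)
The plan is to reduce the statement to the inclusion $p^n\cO_0\subseteq \cO_n$, which is already encoded in the conductor computation of Proposition \ref{conductor}. Setting $m=n$ there gives $(\cO_n:\cO_0) = p^n\cO_0$, and in particular $p^n\cO_0\subseteq \cO_n$. So it suffices to show that the hypothesis $\eta(x)\ge t_n$ forces $x\in p^n\cO_0$, after which the proposition follows at once.

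First I would handle the nonsplit case. Here $\eta(x)=\valpi(x)$ and $t_n=ne$, and by definition of the ramification index $\valpi(p)=e$, so $\valpi(p^n)=ne$. The hypothesis $\valpi(x)\ge ne$ therefore says $x\in \pi^{ne}\cO_L = p^n\cO_L = p^n\cO_0$, as wanted.

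Next I would handle the split case. Here $x=(x_1,x_2)$ and $\eta(x)=\min(\valp(x_1),\valp(x_2))\ge n$. Both coordinates thus satisfy $\valp(x_i)\ge n$, i.e.\ $x_i\in p^n\cO_K$, which gives $x\in p^n\cO_K\times p^n\cO_K = p^n(\cO_K\times\cO_K) = p^n\cO_0$.

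In both cases $x\in p^n\cO_0\subseteq \cO_n$, which concludes the proof. There is no real obstacle: the content of the proposition is entirely packaged into the conductor identity $(\cO_n:\cO_0)=p^n\cO_0$, and the role of the threshold $t_n$ is precisely to translate the coordinate-wise valuation bound $\eta(x)\ge t_n$ into membership in $p^n\cO_0$ uniformly across the ramified, unramified, and split cases.
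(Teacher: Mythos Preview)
Your proof is correct and follows essentially the same route as the paper: both arguments reduce the hypothesis $\eta(x)\ge t_n$ to the membership $x\in p^n\cO_0$ coordinate-wise (using $\pi^{ne}\cO_L=p^n\cO_L$ in the nonsplit case and $\valp(x_i)\ge n$ in the split case), and then invoke the conductor identity $(\cO_n:\cO_0)=p^n\cO_0$ to conclude $x\in\cO_n$.
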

\begin{proof}
Let $x\in \cO_0$ with $\eta(x)\ge t_n$. We claim $x$ is a multiple of $p^n$ in $\cO_0$. 

In the nonsplit case, $ \valpi(x) = \eta(x) \ge ne$. This implies
\begin{equation*}
   x = \pi^{\valpi(x)}u = \pi^{ne}\pi^{\valpi(x) - ne}u = p^n \left(\pi^{\valpi(x) - ne}u\right),
\end{equation*}
for some unit $u\in \cO_L$. On the other hand, for the split case
\begin{equation*}
   x = (x_1, x_2) = (p^{\valp(x_1)}u_1, p^{\valp(x_2)}u_2) = p^n(p^{\valp(x_1) -n}u_1, p^{\valp(x_2)-n}u_2).
\end{equation*}
for some unit $u_1, u_2\in \cO_K$. In any case, we conclude
\begin{equation*}
x\in p^n\cO_0 = (\cO_0: \cO_n) \subseteq \cO_n,
\end{equation*}
proving that $x\in \cO_n$.
\end{proof}

We now define, for every possible type $\omega$,
\begin{equation*}
    \cO_n^{[\omega]} := \displaystyle\bigsqcup_{\varepsilon(I) = \omega} R(I),
\end{equation*}
that is, the set of all representatives of principal ideals in $\fP_n$ that contribute $q^{-c(\omega)}$ to the principal part of the zeta function. The previous proposition implies that if $\omega$ is a high type, then
\begin{equation*}
    \cO_n^{[\omega]} = \left\{x\in \cO_0 \;|\; \varepsilon(x) = \omega\right\}.
\end{equation*}
In particular, $\cO_n^{[\omega]}\neq\emptyset$, that is, for high types there must be high ideals of type $\omega$. This is what we meant when we said above that \textit{high ideals contribute in all possible ways available}. The next proposition is now evident and states explicitly how many such ideals there are:
\begin{proposition}\label{highequivariant}
    Let $\omega$ be a high type. Define the map $\Psi_{\omega}: \cO_n^{[\omega]}\longrightarrow \cO_L^*$ given by 
    \begin{equation*}
        \Psi_{\omega}(x) = \pi^{-\omega}x 
    \end{equation*}
    in the nonsplit case and by
    \begin{equation*}
        \Psi_{\omega}(x_1, x_2) = (p^{-\omega_1}x_1, p^{-\omega_2}x_2) 
    \end{equation*}
    in the split case. Then $\Psi_{\omega}$ is an $\cO_n^*$-equivariant isomorphism. 
    
    In here, $\cO_n^{[\omega]}$ and $\cO_L^*$ are equipped with the natural actions of $\cO_n^*$ by left multiplication. As a consequence, the orbits of one are in correspondence with the orbits of the other, and so
    \begin{equation*}
        |X_{\omega}| = [\cO_0^*: \cO_n^*].
    \end{equation*}
\end{proposition}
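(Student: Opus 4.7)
The plan is to leverage Proposition \ref{highabsorption} to give a concrete pointwise description of $\cO_n^{[\omega]}$ and then verify each claim (well-definedness, bijectivity, equivariance) by a direct computation, concluding with an orbit count.

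First I would unpack what $\cO_n^{[\omega]}$ actually is when $\omega$ is high. By definition, $\omega$ being high means $\eta(x) \ge t_n$ for every $x$ with $\varepsilon(x) = \omega$. Proposition \ref{highabsorption} then guarantees that any such $x \in \cO_0$ automatically lies in $\cO_n$, so $x\cO_n \in \fP_n$ is a principal ideal of type $\omega$ with $x$ as a representative. Conversely, every representative has type $\omega$ by definition. This establishes
\[
\cO_n^{[\omega]} = \{x \in \cO_0 : \varepsilon(x) = \omega\},
\]
which is the key identification behind the slogan ``high ideals contribute in all possible ways available.''

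Next I would verify the three properties of $\Psi_\omega$. Well-definedness: in both cases the definition of type forces every coordinate valuation of $\pi^{-\omega}x$ to be zero, hence $\Psi_\omega(x) \in \cO_L^*$. Bijectivity: the formula $y \mapsto \pi^{\omega}y$ (respectively $(y_1,y_2) \mapsto (p^{\omega_1}y_1, p^{\omega_2}y_2)$) sends any unit of $\cO_L$ to an element of type exactly $\omega$ in $\cO_0$, hence into $\cO_n^{[\omega]}$ by the identification above, and it is visibly a two-sided inverse to $\Psi_\omega$. Equivariance: for $u \in \cO_n^* \subseteq \cO_0^*$, multiplication by $u$ preserves types, so the $\cO_n^*$-action on $\cO_n^{[\omega]}$ is well defined; and since $\cO_L$ is commutative, $\Psi_\omega(ux) = \pi^{-\omega}ux = u\,\Psi_\omega(x)$.

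Finally, for the orbit count, the set $\cO_n^{[\omega]}$ is partitioned into the subsets $R(I)$ for $I \in X_\omega$, and each $R(I)$ is a single $\cO_n^*$-orbit because two generators of the same principal ideal differ by a unit in $\cO_n^*$. On the other hand, the $\cO_n^*$-orbits in $\cO_L^* = \cO_0^*$ under left multiplication are precisely the cosets of the subgroup $\cO_n^*$ in $\cO_0^*$, of which there are $[\cO_0^* : \cO_n^*]$. The equivariant bijection $\Psi_\omega$ identifies these two orbit spaces, yielding $|X_\omega| = [\cO_0^* : \cO_n^*]$. There is no real obstacle here: the only conceptual step is the reduction to a pointwise description of $\cO_n^{[\omega]}$, and Proposition \ref{highabsorption} handles precisely that. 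In the low regime the analogous statement would fail because elements of $\cO_0$ of low type need not lie in $\cO_n$, which is exactly why low ideals will require a separate and more delicate analysis.
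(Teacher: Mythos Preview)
Your proof is correct and follows exactly the approach the paper intends: the paper states just before this proposition that, for high $\omega$, one has $\cO_n^{[\omega]} = \{x\in\cO_0 : \varepsilon(x)=\omega\}$, and then declares the proposition ``evident.'' You have simply spelled out the details (well-definedness, bijectivity, equivariance, and the orbit count via $R(I)$ being a single $\cO_n^*$-orbit) that the paper leaves to the reader.
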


This puts in solid ground the contribution of the high ideals. We now have to deal with the low ideals, which is a similar analysis, but we have to find how to deal with the fact that proposition \ref{highabsorption} is no longer true. The first step to do this is the following
\begin{proposition}
Let $I\in\fP_n$ be a low ideal and $x + yp^{n}\Delta \in R(I)$ a representative. Then $\valp(x)$ is independent of the representative of $I$ chosen and, furthermore, its value is smaller than $n$ and is equal to $\valp(a)$ for every coordinate of $\varepsilon(I)$. 
\end{proposition}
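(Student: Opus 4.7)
The plan is to extract all three claims from a single observation, namely that the low hypothesis forces $\valp(x)<n$. Once this is in hand, the strict ultrametric inequality does the rest, uniformly in the split and nonsplit cases.

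First I would prove $\valp(x)<n$ by contradiction. If $\valp(x)\ge n$, then $x\in p^n\cO_K\subseteq p^n\cO_L$, and since $yp^n\Delta\in p^n\cO_L$ as well, $\alpha:=x+yp^n\Delta\in p^n\cO_0$. In the split case this yields $\valp(\alpha_i)\ge n$ for each coordinate $\alpha_i$ of $\alpha$, so $\eta(I)\ge n=t_n$; in the nonsplit case it yields $\valpi(\alpha)\ge\valpi(p^n)=ne=t_n$. Either way $I$ would be a high ideal, contradicting the hypothesis.

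Given $\valp(x)<n$, each coordinate of $\varepsilon(I)$ can then be read off immediately. Since $\Delta\in\cO_L$ we have $\valp(\Delta_i)\ge 0$ in the split case and $\valpi(\Delta)\ge 0$ in the nonsplit case, so the contribution of $yp^n\Delta$ has valuation at least $n$ (respectively $ne$), strictly exceeding $\valp(x)$ (respectively $e\valp(x)$). The strict triangle inequality then gives $\valp(\alpha_i)=\valp(x)$ for $i=1,2$ in the split case and $\valpi(\alpha)=e\valp(x)$ in the nonsplit case, which is exactly the asserted equality between $\valp(x)$ and each coordinate of $\varepsilon(I)$. Independence of $\valp(x)$ from the choice of representative is then automatic: proposition \ref{typeindeprep} tells us $\varepsilon(I)$ depends only on $I$, and the formulas above show that $\valp(x)$ can be recovered from any coordinate of $\varepsilon(I)$.

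The main subtlety, rather than an obstacle, is just making sure the strict triangle inequality applies on both sides of the split/nonsplit dichotomy; this is where the threshold $t_n$ being calibrated to $n$ or $ne$ becomes essential. As a direct sanity check one may also verify independence by a unit multiplication: writing $\alpha_1=u\alpha_2$ with $u=a+bp^n\Delta\in\cO_n^*$ and $a\in\cO_K^*$, expanding via $\Delta^2=\tau_\Delta\Delta-\delta_\Delta$ produces $x_1=ax_2-by_2p^{2n}\delta_\Delta$, which combined with $\valp(x_2)<n<2n$ gives $\valp(x_1)=\valp(x_2)$ at once.
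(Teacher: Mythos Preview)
Your proof is correct and follows essentially the same approach as the paper's: first establish $\valp(x)<n$ by contradiction (the paper does this coordinatewise in the split case, you do it uniformly via $\alpha\in p^n\cO_0$), then invoke the strict ultrametric inequality to read off each coordinate of $\varepsilon(I)$, with independence falling out of Proposition~\ref{typeindeprep}. The only difference is presentational: the paper writes out the factorization $\alpha_i=p^t(u+bp^{n-t}\Delta_i)$ explicitly rather than citing the strict triangle inequality, and it treats only the split case in detail, whereas you handle both cases in parallel.
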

\begin{proof}
We will only do the computations for the split case as the nonsplit case is analogous. Realize there are no low ideals for $n = 0$, so we may assume $n \ge 1$. 

For the split case pick $\alpha\in R(I)$, say $\alpha = (\alpha_1, \alpha_2)$, and put $\varepsilon(I) = (l, m)$. Since $I$ is low one of $l$ or $m$ is smaller than $n$. Without loss of generality suppose $l < n$.

Write $\alpha = a\cdot\mathbf{1} + b\cdot(p^n\Delta)$, for some $a, b\in \cO_K$ which in coordinates becomes
\begin{equation*}
    \alpha_1 = a + bp^n\Delta_1,
    \alpha_2 = a + bp^n\Delta_2.
\end{equation*}
Suppose $\valp(a)\ge n$. Then
\begin{equation*}
    l   = \valp(a + bp^n\Delta_1) 
        \ge \min(\valp(a), n + \valp(b\Delta_1))
        \ge n,
\end{equation*}
which is a contradiction. We conclude $t := \valp(a) < n$, say $a = p^tu$ for some unit $u\in \cO_K$. Then, for $i = 1, 2$,
\begin{equation*}
    \alpha_i = p^tu + bp^n\Delta_i = p^t(u + bp^{n-t}\Delta_i),
\end{equation*}
and because $n - t > 0$ we have $bp^{n - t}\Delta_i$ is not a unit of $\cO_K$, and since $u$ is, then $u + bp^{n-t}\Delta_i$ is also a unit. We conclude,
$\valp(\alpha_i) = t = \valp(a)$, which proves independence, equality of entries and its value smaller than $n$, as desired.

\end{proof}
Due to this proposition we can make the following
\begin{definition}
Let $I\in\fP_n$ a low ideal. The common value of $\valp(x)$ for all the $x + yp^n\Delta\in R(I)$ is denoted by $\etap(I)$.
\end{definition}
As a consequence, contrary to the high types, we have that not all low types contribute to the zeta function. More precisely, we have
\begin{proposition}
Let $I\in\fP_n$ be a low ideal. Then, in the nonsplit case, we have
\begin{equation*}
    \eta(I) = e\etap(I),
\end{equation*}
and so the only possible types for low ideals are $0, e, 2e,..., (n -1)e$. On the other hand, for the split case
\begin{equation*}
    \varepsilon(I) = (\etap(I), \etap(I)),
\end{equation*}
and so the only possible types for low ideals are $(0, 0), (1, 1),..., (n -1, n - 1)$.
\end{proposition}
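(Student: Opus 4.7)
The split case is essentially immediate from the preceding proposition: given a representative $\alpha = x + yp^n\Delta \in R(I)$ with coordinates $\alpha_i = x + yp^n\Delta_i$, that proposition has already shown $\valp(\alpha_1) = \valp(\alpha_2) = \valp(x)$, and this common value is precisely the definition of $\etap(I)$. By definition of $\varepsilon$ in the split case, this means $\varepsilon(I) = (\etap(I), \etap(I))$. Since $\etap(I) < n$ and is a nonnegative integer, the possible types for low ideals are confined to the diagonal set $\{(0,0), (1,1), \ldots, (n-1, n-1)\}$.

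For the nonsplit case I would run a direct unit-extraction argument analogous to the one in the preceding proposition, but now targeted at $\valpi$ instead of $\valp$. Let $\alpha = x + yp^n\Delta \in R(I)$ be a representative and set $t := \etap(I) = \valp(x) < n$. Writing $x = p^t u$ for some $u \in \cO_K^*$, we get
\begin{equation*}
    \alpha = p^t u + y p^n \Delta = p^t\!\left(u + y p^{n-t}\Delta\right).
\end{equation*}
Since $n - t \geq 1$ and $p \in \pi\cO_L$ (indeed $\valpi(p) = e \geq 1$), the element $y p^{n-t}\Delta$ lies in the maximal ideal $\pi\cO_L$, so $u + y p^{n-t}\Delta$ is a unit in $\cO_L$. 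Therefore $\valpi(\alpha) = \valpi(p^t) = et$, i.e. $\eta(I) = e\etap(I)$.

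To close, I combine this with the constraint $\etap(I) \in \{0, 1, \ldots, n-1\}$ inherited from the previous proposition (the common $p$-adic valuation of coordinates of a low ideal is strictly less than $n$). In the nonsplit case the type $\varepsilon(I) = \eta(I)$ is a single integer, so the possible types are exactly $\{0, e, 2e, \ldots, (n-1)e\}$, as claimed. I do not expect any genuine obstacle here: the key technical work, namely identifying $\valp(x)$ as an invariant of the ideal and bounding it by $n$, has already been carried out in the preceding proposition. The present statement just reads off its content at the level of $\varepsilon$, with the mild extra step of translating $\valp$ into $\valpi$ via the ramification index in the nonsplit case.
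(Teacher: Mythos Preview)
Your proposal is correct and matches the paper's own proof essentially line by line: in the nonsplit case you factor $\alpha = p^t(u + yp^{n-t}\Delta)$ and use $t<n$ to see the second factor is a unit of $\cO_L$, while in the split case you simply read off the conclusion from the preceding proposition. The only cosmetic difference is the order in which the two cases are treated.
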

\begin{proof}
For the split case we have for $x + yp^n\Delta\in R(I)$ with $x = p^tu$ and $u\in \cO_K^*$ that 
 \begin{equation*}
     \eta(I) = \valpi(x + yp^n\Delta) = \valpi(p^t(u + yp^{n - t}\Delta)) = t\valpi(p) = te,
 \end{equation*}
 as desired. Notice that for the last equality we have used that $t < n$, by the previous proposition, and hence that $u + yp^{n - t}\Delta$ is a unit of $\cO_L$. This implies that the possible types of low ideals, which are precisely the values of $\varepsilon(I) =\eta(I)$, can only be the multiples of $e$ from $0$ to $(n - 1)e$.

For the split case, the previous proposition implies the entries of $\varepsilon(I)$ are equal and smaller than $n$. Hence, the possible types are $(0, 0),..., (n - 1, n - 1)$, as claimed.

\end{proof}
Notice in both cases we have a \textit{linear behaviour} for the types of low ideals, which is a remarkable fact for the split case since, a priori, there are several other possible low types that end up never appearing.
Similar to proposition \ref{highequivariant}, we now have a group action for the low types:
\begin{proposition}
Let $\omega$ be one of the types of the previous proposition (i.e. the possible ones for low ideals). 
Define $\Psi_{\omega}: \cO_n^{[\omega]}\longrightarrow \cO_{n - d}^*$ by
    \begin{equation*}
        \Psi_{\omega}(x) = p^{-d}x.
    \end{equation*}
where $d = \eta(\omega)/e$ in the nonsplit case and $d = \eta(\omega)$ in the split case. Then $\Psi$ is a $\cO_n^*$-equivariant isomorphism between the $\cO_n^*$-spaces $\cO_n^{[\omega]}$ and $\cO_{n - d}^*$, where in both spaces the action is by left multiplication. As a consequence, we have
\begin{equation*}
    |X_{\omega}| = [\cO_{n - d}^*:\cO_n^*] = \dfrac{[\cO_{0}^*:\cO_n^*]}{[\cO_{0}^*:\cO_{n - d}^*]}
\end{equation*}
\end{proposition}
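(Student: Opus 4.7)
The plan is to verify the four claims in order: well-definedness of $\Psi_\omega$ as a map into $\cO_{n-d}^*$, injectivity, surjectivity, and $\cO_n^*$-equivariance. Once these are established, the bijection of orbit spaces, combined with the fact that two representatives of the same principal ideal differ by a unit of $\cO_n$, yields $|X_\omega| = |\cO_{n-d}^*/\cO_n^*| = [\cO_{n-d}^*:\cO_n^*]$. The rewriting $[\cO_{n-d}^*:\cO_n^*] = [\cO_0^*:\cO_n^*]/[\cO_0^*:\cO_{n-d}^*]$ is then just multiplicativity of indices in the tower $\cO_n^* \subseteq \cO_{n-d}^* \subseteq \cO_0^*$.

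For well-definedness, I would take $x \in \cO_n^{[\omega]}$ and write $x = a + bp^n\Delta$ with $a, b \in \cO_K$. The two preceding propositions together pin down $\valp(a) = d$: in the nonsplit case $\omega = e\etap(I) = e\valp(a)$ forces $\valp(a) = d$, and in the split case $\varepsilon(I) = (\valp(a), \valp(a)) = (d, d)$ directly. Hence $a = p^d u$ for some $u \in \cO_K^*$, and $p^{-d}x = u + bp^{n-d}\Delta \in \cO_{n-d}$. Since $n - d \ge 1$ (as $d \le n-1$ for every low type) and $u$ is a unit of $\cO_K$, the unit criterion from the previous section places $p^{-d}x$ in $\cO_{n-d}^*$.

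Injectivity is immediate because $\Psi_\omega$ is scalar multiplication by a nonzero element of $L$, and equivariance is equally trivial: $\Psi_\omega(sx) = p^{-d}(sx) = s\Psi_\omega(x)$ for $s \in \cO_n^*$. For surjectivity, I start with $v \in \cO_{n-d}^*$; by the unit criterion, write $v = u + bp^{n-d}\Delta$ with $u \in \cO_K^*$, so $p^d v = p^d u + bp^n\Delta \in \cO_n$. Its type is $\omega$: in the nonsplit case $\valpi(p^d v) = ed$ because $v \in \cO_L^*$, and in the split case both coordinates of $v$ are $\cO_K$-units so $\varepsilon(p^d v) = (d, d)$. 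To land in $\cO_n^{[\omega]}$ I must check $p^d v$ is a representative of $p^d v \cdot \cO_n$, not merely an element: any other element has the form $(p^d v) w$ with $w \in \cO_n \subseteq \cO_0$, so every coordinate of $\varepsilon(w)$ is nonnegative and $\varepsilon((p^d v) w) = \varepsilon(p^d v) + \varepsilon(w)$ dominates $\varepsilon(p^d v)$ coordinatewise, exhibiting $p^d v$ as minimal.

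The main obstacle is the verification that $p^d v$ is genuinely a representative of its $\cO_n$-ideal (and not just an element), since the definition of representative in the split case demands simultaneous coordinatewise minimality; the valuation inequality above handles both cases uniformly. A secondary care point is ensuring $p^{-d}x$ lies specifically in the unit group rather than merely in $\cO_{n-d}$, which is precisely where the identification $\valp(a) = d$ from the previous proposition is essential.
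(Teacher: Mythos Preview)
Your proof is correct and follows essentially the same route as the paper: write an element of $\cO_n^{[\omega]}$ as $p^d u + b p^n\Delta$ with $u\in\cO_K^*$, factor out $p^d$ to land in $\cO_{n-d}^*$, and note that the steps reverse. Your surjectivity check is in fact more explicit than the paper's (which simply asserts reversibility), and your verification that $p^d v$ has type $\omega$ is a detail the paper leaves implicit. One minor remark: your additional paragraph checking that $p^d v$ is a ``representative'' via coordinatewise minimality is harmless but unnecessary, since $R(I)$ is by definition the set of \emph{generators} of $I$, and $p^d v$ tautologically generates $p^d v\,\cO_n$; once you have computed $\varepsilon(p^d v)=\omega$, membership in $\cO_n^{[\omega]}$ is immediate.
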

\begin{proof}
For the nonsplit case, an ideal $I$ whose representatives have $\eta(I) = \omega$ satisfies
\begin{equation*}
    \eta(I) = de, \etap(I) = d.
\end{equation*}
On the other hand, for the split case we have $\varepsilon(x) = (d, d)$. Hence, in both cases, we can write any representative as
\begin{equation*}
    p^du + yp^n\Delta,
\end{equation*}
for some unit $u\in \cO_K^*$. Since $d < n$ we can factor the $p^d$ and get
\begin{equation*}
    p^d(u + yp^{n - d}\Delta),
\end{equation*}
and since $u$ is a unit of $\cO_K$, then $u + yp^{n - d}$ is a unit of $\cO_{n - d}$. Since all of these steps are reversible, we get that $\Psi_d(\cO_n^{[\omega]}) = \cO_{n - d}^*$.

That it is equivariant is immediate and that all orbits count follows from the fact that the representatives are nondivisors of $0$ and we can appeal to proposition \ref{contribution}.

\end{proof}

Now we are ready to prove the main result of this section, which is the following
\begin{proposition}\label{recurrenceshape}
The principal part of the zeta functions of the orders $\cO_n$ satisfies
\begin{equation*}
        \zeta_n^P(s) = \displaystyle\sum_{d = 0}^{n - 1}\dfrac{[\cO_{0}^*:\cO_n^*]}{[\cO_{0}^*:\cO_{n - d}^*]}\cdot\dfrac{1}{q^{2ds}} + \dfrac{[\cO_{0}^*:\cO_n^*]}{q^{2ns}V(s)}.
\end{equation*}
In here $V(s)$ is the factor appearing in proposition \ref{polynomial} on page \pageref{polynomial}.
\end{proposition}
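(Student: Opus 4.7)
The plan is to start from the already-established grouping
\begin{equation*}
    \zeta_n^P(s) \;=\; \sum_{\omega}\frac{|X_\omega|}{q^{c(\omega)s}},
\end{equation*}
and split the sum into its contributions from low types and from high types, the point being that the two groups of types are counted by two genuinely different formulas (the two propositions immediately preceding Proposition \ref{recurrenceshape}). I would then verify that the low-type contribution matches the finite sum on the right of the proposition, and that the high-type contribution matches the closed term $[\cO_0^*:\cO_n^*]/(q^{2ns}V(s))$.

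For the low part, the proposition above says the only types that appear are parametrised by a single integer $d=0,1,\ldots,n-1$, namely $\omega = de$ in the nonsplit case and $\omega=(d,d)$ in the split case. In either case one computes directly from the definition of $c(\omega)$ that $c(\omega)=2d$, using $ef=2$ in the nonsplit case and $c((d,d))=d+d$ in the split case. Combining with the cardinality formula $|X_\omega|=[\cO_0^*:\cO_n^*]/[\cO_0^*:\cO_{n-d}^*]$ from the preceding proposition, summing on $d$ produces exactly the first term of the desired expression.

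For the high part, Proposition \ref{highequivariant} gives $|X_\omega|=[\cO_0^*:\cO_n^*]$ uniformly, so all that remains is a geometric sum over the high types. In the nonsplit case, the high types are $\omega=ne+k$ for $k\geq 0$, so $c(\omega)=f(ne+k)=2n+fk$, and summing the geometric series yields $q^{-2ns}(1-q^{-fs})^{-1}$; since $V(s)=1-q^{-fs}$ in this case, we recover $[\cO_0^*:\cO_n^*]/(q^{2ns}V(s))$. In the split case the high types form a two-parameter family $(n+k_1,n+k_2)$ with $k_i\geq 0$, giving $c(\omega)=2n+k_1+k_2$ and a product of two geometric series, and since $V(s)=(1-q^{-s})^2$ in this case the same identification goes through. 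The only mildly delicate step is making sure one is parametrising the high types exhaustively and without double-counting, which is transparent once the definition of high type is unpacked; no real obstacle arises, since all the heavy lifting has been done in the preceding propositions about orbits of representatives under $\cO_n^*$.
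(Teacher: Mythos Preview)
Your proposal is correct and follows essentially the same route as the paper's own proof: split $\zeta_n^P(s)$ into the low-type and high-type pieces, use the preceding orbit-counting propositions to evaluate $|X_\omega|$ in each regime, and then sum the resulting (finite or geometric) series, identifying the high-type factor with $1/V(s)$ case by case. The only cosmetic difference is that the paper writes out the nonsplit and split computations separately rather than phrasing them through $c(\omega)$, but the underlying argument is identical.
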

\begin{proof}
Collecting the contributions of both high and low ideals we will obtain this result. For the nonsplit case the high ideals contribute
\begin{equation*}
    \displaystyle\sum_{j\ge 0} \dfrac{|X_{ne + j}|}{q^{f(ne + j)s}}
    = \displaystyle\sum_{j\ge 0} \dfrac{[\cO_{0}^*:\cO_n^*]}{q^{2ns + fjs}} =\dfrac{[\cO_{0}^*:\cO_n^*]}{q^{2ns}(1 - q^{-fs})}.
\end{equation*}
For the low ideals we only get contributions for $0, e,..., (n - 1)e$, and for each one of them we have $|X_{de}|$ such terms. Hence, we obtain the low ideals contribute
\begin{equation*}
    \displaystyle\sum_{d = 0}^{n - 1} \dfrac{|X_{de}|}{q^{f(de)s}}
    = \displaystyle\sum_{d = 0}^{n - 1}\dfrac{|X_{de}|}{q^{2ds}}
    = \displaystyle\sum_{d = 0}^{n - 1}\dfrac{[\cO_{0}^*:\cO_n^*]}{[\cO_{0}^*:\cO_{n - d}^*]}\cdot\dfrac{1}{q^{2ds}}
\end{equation*}
Notice that we have used $fe = 2$.
On the other hand, for the high ideals in the split case, we get

\begin{equation*}
    \displaystyle\sum_{\eta(I)\ge n}\dfrac{1}{[\cO_n:I]^s}
    = \displaystyle\sum_{l, m\ge n}\displaystyle\sum_{\varepsilon(I) = (l, m)}\dfrac{1}{[\cO_n:I]^s}
    = \dfrac{[\cO_L^*:\cO_n^*]}{q^{2ns}}\displaystyle\sum_{i, j\ge 0}\dfrac{1}{q^{(i + j)s}}
    =\dfrac{[\cO_L^*:\cO_n^*]}{q^{2ns}(1 - q^{-s})^2}.
\end{equation*}
For the low ideals we have the possible types are only $(0,0),..., (n - 1, n-1)$. Hence, the contribution is
\begin{equation*}
    \displaystyle\sum_{d = 0}^{n - 1} \dfrac{|X_{(d, d)}|}{q^{(d + d)s}}
    = \displaystyle\sum_{d = 0}^{n - 1}\dfrac{|X_{(d, d)}|}{q^{2ds}}
    = \displaystyle\sum_{d = 0}^{n - 1}\dfrac{[\cO_{0}^*:\cO_n^*]}{[\cO_{0}^*:\cO_{n - d}^*]}\cdot\dfrac{1}{q^{2ds}}.
\end{equation*}
Finally, by inspection of the two cases, we see  $V(s)$ coincides with $(1 - q^{-fs})$ in the nonsplit case and with $(1 - q^{-s})^2$ in the split one.
\end{proof}


\section{Solution of the Recurrence}\label{solutionoftherecurrence}

We have seen that the principal part depends on the indices $[\cO_0^*: \cO_n^*]$. We now compute the values of these indices. 

The following formula is classical in the general theory of orders but in the case we are is particularly simple, since we do not have to deal with class numbers. We refer the reader to \cite{Neu99} (see theorems 11 and 12 in chapter 1, section 12) or \cite{Sands1991} to see a proof of the general case which implies our case (the proof also works in the split case).

\begin{proposition}
The following formula holds
\begin{equation*}
    [\cO_0^*:\cO_n^*] = \dfrac{\abs{\left(\nicefrac{\cO_0}{p^n\cO_0}\right)^*}}{\abs{\left(\nicefrac{\cO_n}{p^n\cO_0}\right)^*}}
\end{equation*}
\end{proposition}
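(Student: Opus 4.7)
The plan is to exhibit a natural isomorphism
\[
\cO_0^*/\cO_n^* \;\cong\; \bigl(\cO_0/p^n\cO_0\bigr)^*\big/\bigl(\cO_n/p^n\cO_0\bigr)^*,
\]
from which the formula follows by comparing orders. The setup rests on proposition \ref{conductor}: since $p^n\cO_0 = (\cO_n:\cO_0)$, the set $p^n\cO_0$ is an ideal of both $\cO_n$ and $\cO_0$, so $\cO_n/p^n\cO_0$ embeds as a subring of $\cO_0/p^n\cO_0$ and the right-hand side is a meaningful quotient of finite groups.

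The first step is to check that the reduction homomorphism $\rho : \cO_0^* \longrightarrow (\cO_0/p^n\cO_0)^*$ is surjective. In the nonsplit case this is the classical lifting of units for a local ring; in the split case it follows from the CRT identification $\cO_0/p^n\cO_0 \cong (\cO_K/p^n\cO_K)^2$ together with unit-lifting inside the local ring $\cO_K$. Its kernel $1 + p^n\cO_0$ is entirely contained in $\cO_n^*$: any element $1 + p^n x$ lies in $\cO_n$ because $p^n\cO_0\subseteq\cO_n$, and its constant $\cO_K$-coefficient is $1$, so the unit criterion proved in section \ref{sectionarithmetic} applies.

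Next, I would identify $\rho^{-1}\bigl((\cO_n/p^n\cO_0)^*\bigr) = \cO_n^*$. If $u\in\cO_0^*$ satisfies $\bar{u} = \bar{a}$ for some $a\in\cO_n$, then $u - a \in p^n\cO_0 \subseteq \cO_n$ forces $u\in\cO_n$; combined with the previously established equality $\cO_n^* = \cO_0^* \cap \cO_n$, this gives $u\in\cO_n^*$. The converse inclusion is immediate. Together with the surjectivity of $\rho$, a standard correspondence-theorem argument produces the displayed isomorphism, and taking cardinalities yields the asserted index formula.

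The main obstacle is conceptual rather than technical: ensuring the unit-lifting step works uniformly in the split case, where $\cO_0 = \cO_K\times\cO_K$ is only semilocal. The CRT decomposition reduces this to the local ring $\cO_K$ and makes the argument identical in the ramified, unramified and split cases.
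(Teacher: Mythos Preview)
Your argument is correct and self-contained. The paper does not actually prove this proposition: it simply cites Neukirch (Chapter~I, \S12, Theorems~11 and~12) and Sands for the general index formula relating the unit group of an order to that of its maximal order, noting that in the present local situation no class-number factor appears and that the references cover the split case as well. Your route is precisely the elementary mechanism underlying those references, specialised to this setting: you exploit the conductor identification $p^n\cO_0=(\cO_n:\cO_0)$ from proposition~\ref{conductor} so that $\cO_n/p^n\cO_0$ sits as a subring of $\cO_0/p^n\cO_0$, then combine surjectivity of reduction for the (semi)local ring $\cO_0$ with the equality $\cO_n^*=\cO_0^*\cap\cO_n$ established in section~\ref{sectionarithmetic} to get $\rho^{-1}\bigl((\cO_n/p^n\cO_0)^*\bigr)=\cO_n^*$. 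One tiny imprecision: writing $x=a+b\Delta$, the element $1+p^nx$ has $\cO_K$-coefficient $1+p^na$ in the basis $\{1,p^n\Delta\}$, not literally~$1$; but $1+p^na\in\cO_K^*$, so the unit criterion still applies and your conclusion stands. The benefit of your approach is that it keeps the argument internal to the paper and makes transparent why no class-number correction intervenes: the two rings share the common ideal $p^n\cO_0$, so the comparison is purely a finite-group computation.
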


We have now reached the point where it is desirable to make the distinction between the two possible nonsplit cases. 
\begin{proposition}\label{valuesindices}
The indices of the units subgroups satisfy for $n\ge 1$,
\begin{equation*}
    [\cO_0^*:\cO_n^*] = \left\{
    \begin{array}{ll}
        q^n & \mbox{in the ramified case,}\\
        (q+1)q^{n-1} & \mbox{in the unramified case,}\\
        (q-1)q^{n-1} & \mbox{in the split case.}\\
    \end{array}
    \right.
\end{equation*}
And, of course, $[\cO_0^*:\cO_0^*] = 1$.
\end{proposition}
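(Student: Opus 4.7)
The plan is to apply the formula in the preceding proposition, namely
\begin{equation*}
[\cO_0^*:\cO_n^*] = \frac{|(\cO_0/p^n\cO_0)^*|}{|(\cO_n/p^n\cO_0)^*|},
\end{equation*}
and compute numerator and denominator separately. The denominator is the same in all three cases: by Corollary \ref{quotientequality} we have $\cO_n/p^n\cO_0 \cong \cO_K/p^n\cO_K$, and since $\cO_K$ is a local ring with residue field of size $q$, the group $(\cO_K/p^n\cO_K)^*$ has order $(q-1)q^{n-1}$. So the whole task reduces to computing $|(\cO_L/p^n\cO_L)^*|$ in each case.

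In the ramified case $p\cO_L = \pi^2\cO_L$, so $p^n\cO_L = \pi^{2n}\cO_L$; the ring $\cO_L/\pi^{2n}\cO_L$ is local with residue field of size $Q = q^f = q$, so its unit group has order $(q-1)q^{2n-1}$. Dividing by $(q-1)q^{n-1}$ yields $q^n$. In the unramified case $p\cO_L = \pi\cO_L$ and the residue field has size $Q = q^2$, so $(\cO_L/p^n\cO_L)^*$ has order $(q^2-1)q^{2(n-1)} \cdot 1 = (q^2-1)q^{2n-2}$; factoring $q^2 - 1 = (q-1)(q+1)$ and dividing gives $(q+1)q^{n-1}$. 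In the split case $\cO_L = \cO_K \times \cO_K$ and $p^n\cO_L = p^n\cO_K \times p^n\cO_K$, so $(\cO_L/p^n\cO_L)^* = (\cO_K/p^n\cO_K)^* \times (\cO_K/p^n\cO_K)^*$ has order $((q-1)q^{n-1})^2$; dividing by $(q-1)q^{n-1}$ leaves $(q-1)q^{n-1}$.

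There is no real obstacle: each numerator count is a direct application of the standard formula $|(\cO/\mathfrak{m}^k)^*| = (|\kappa|-1)|\kappa|^{k-1}$ for a complete DVR with residue field $\kappa$, applied to the appropriate ring ($\cO_L$ in the two field cases, $\cO_K$ componentwise in the split case). The only thing one must be careful about is using the correct relation between $p$ and a uniformizer $\pi$ of $\cO_L$: in the ramified case a power of $p$ corresponds to twice as large a power of $\pi$, which is why the $q^{2n-1}$ appears; in the unramified case the index of the residue field is doubled instead. The case $n = 0$ is automatic since both sides equal $1$.
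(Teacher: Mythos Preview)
Your proof is correct and follows essentially the same approach as the paper: both apply the preceding formula, compute the denominator via Corollary~\ref{quotientequality} as $(q-1)q^{n-1}$, and compute the numerator $|(\cO_L/p^n\cO_L)^*|$ using the standard count of units in a quotient of a DVR (the paper spells this out through the filtration by higher unit groups $U_L^{(m)}$, which is just the usual proof of the formula you quote). You in fact go slightly further by treating the split case explicitly, whereas the paper only remarks that it is analogous.
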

\begin{proof}
All the computations are analogous so we only show the nonsplit case so that we can notice clearly how the ramification $e$ and intertia $f$ play a role. 

We have that $\nicefrac{\cO_0}{p^n\cO_0} = \nicefrac{\cO_L}{\pi^{ne}\cO_L}$, and we have the well known isomorphisms
\begin{align*}
    \left(\nicefrac{\cO_L}{\pi^{ne}\cO_L}\right)^* &\cong \nicefrac{\cO_L^*}{U^{(ne)}_L},\\
    \nicefrac{U_L^{(m)}}{U^{(m + 1)}_L} &\cong \nicefrac{\cO_L}{\pi \cO_L}, m\ge 1,
\end{align*}
where $U_L^{(m)}$ are the higher unit groups. Furthermore, we also know that $O_L^* = \mu_{Q - 1}\times U^{(1)}_L$, where $\mu_{Q-1}$ is the group of $(Q - 1)$ roots of unity. From this we get
\begin{equation*}
    \nicefrac{\cO_L^*}{U_L^{(1)}}\cong \mu_{Q - 1}.
\end{equation*}
Now we can iterate and obtain
\begin{equation*}
     \abs{\left(\nicefrac{\cO_L}{\pi^{ne}\cO_L}\right)^*} 
     = \abs{\nicefrac{\cO_L^*}{U^{(ne)}_L}}
     =  \abs{\nicefrac{\cO_L^*}{U^{(1)}_L}}\abs{\nicefrac{U_L^{(1)}}{U^{(2)}_L}}\cdots \abs{\nicefrac{U_L^{(ne - 1)}}{U^{(ne)}_L}}
     = (Q - 1)Q^{ne - 1}.
\end{equation*}
Corollary \ref{quotientequality} on page \pageref{quotientequality} states
\begin{equation*}
    \nicefrac{\cO_n}{p^n \cO_0} \cong \nicefrac{\cO_K}{p^n \cO_K}.
\end{equation*}
Hence,
\begin{equation*}
    \left(\nicefrac{\cO_n}{p^n \cO_0}\right)^* \cong \left(\nicefrac{\cO_K}{p^n \cO_K}\right)^*.
\end{equation*}
By the same iterative argument we thus conclude
\begin{equation*}
    \abs{\left(\nicefrac{\cO_n}{p^n \cO_n}\right)^*} = (q - 1)q^{n - 1}.
\end{equation*}
Now we specialize in each of the cases we have:
\begin{description}
    \item[Ramified, $f = 1$:] Then $e = 2$ and, using the previous proposition result, and recalling $Q = q^f = q$, we get
    \begin{equation*}
        [\cO_0^*: \cO_n^*]  = \dfrac{(Q - 1)Q^{2n - 1}}{(q - 1)q^{n - 1}}
                        = \dfrac{(q - 1)q^{2n - 1}}{(q - 1)q^{n - 1}}
                        = q^n,
    \end{equation*}
    and notice that this same formula works if $n = 0$.
    \item[Unamified, $f = 2$:] In this situation $Q = q^2$ and $e = 1$, then
    \begin{equation*}
        [\cO_0^*: \cO_n^*]  = \dfrac{(Q - 1)Q^{n - 1}}{(q - 1)q^{n - 1}}
                        = \dfrac{(q^2 - 1)q^{2(n - 1)}}{(q - 1)q^{n - 1}}
                        = (q + 1)q^{n - 1}.
    \end{equation*}
\end{description}
This concludes the proof.
\end{proof}

We have found before that the recurrence relation in each case is
\begin{equation*}
    \zeta_n(s) = \zeta_n^P(s) + q^{-s}\zeta_{n - 1}(s).
\end{equation*}
We are now ready to solve this equations explicitly. We begin by finding the initial condition:
\begin{proposition}
The zeta function of the order $\cO_0 = \cO_L$ is in the nonsplit case
\begin{equation*}
    \zeta_0(s) = \dfrac{1}{1 - q^{-fs}},
\end{equation*}
and
\begin{equation*}
    \zeta_0(s) = \dfrac{1}{(1 - q^{-s})^2}.
\end{equation*}
in the split case. In particular, in each case, $\zeta_0(s) = V(s)^{-1}$, where $V(s)$ is the factor appearing in proposition \ref{polynomial} at page \pageref{polynomial}.
\end{proposition}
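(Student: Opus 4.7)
The plan is to compute $\zeta_0(s) = \sum_{I \subseteq \cO_L} [\cO_L : I]^{-s}$ directly by enumerating the finite-index ideals in each case, then observing that the answer in each case equals $V(s)^{-1}$ by inspection.

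In the nonsplit case, $\cO_L$ is a complete discrete valuation ring with uniformizer $\pi$ and residue field of cardinality $Q = q^f$. Hence every nonzero ideal of $\cO_L$ is of the form $\pi^k \cO_L$ for some $k \geq 0$, and by the standard filtration one has $[\cO_L : \pi^k \cO_L] = Q^k = q^{fk}$. Summing over $k \geq 0$ gives the geometric series
\begin{equation*}
\zeta_0(s) = \sum_{k \geq 0} q^{-fks} = \frac{1}{1 - q^{-fs}},
\end{equation*}
which matches $V(s)^{-1}$ since in the nonsplit case $V(s) = 1 - q^{-fs}$ (there is a single local factor with inertia degree $f$).

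In the split case, $\cO_L = \cO_K \times \cO_K$ contains the orthogonal idempotents $(1,0)$ and $(0,1)$. For any ideal $I \subseteq \cO_L$ and $(a,b) \in I$, we have $(a,0) = (a,b)\cdot(1,0) \in I$ and similarly $(0,b) \in I$, so $I$ decomposes as $I = I_1 \times I_2$ with $I_i$ an ideal of $\cO_K$. The finite-index ideals of the DVR $\cO_K$ are exactly $p^a \cO_K$ for $a \geq 0$, with $[\cO_K : p^a \cO_K] = q^a$. Therefore
\begin{equation*}
\zeta_0(s) = \sum_{a, b \geq 0} \frac{1}{[\cO_L : p^a \cO_K \times p^b \cO_K]^s} = \sum_{a,b \geq 0} q^{-(a+b)s} = \left(\frac{1}{1 - q^{-s}}\right)^2,
\end{equation*}
matching $V(s)^{-1}$ since in the split case $L = K \times K$ yields $g = 2$ and $f_1 = f_2 = 1$, so $V(s) = (1 - q^{-s})^2$.

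There is essentially no obstacle here; the only point requiring a brief justification is the ideal decomposition $I = I_1 \times I_2$ in the split case, which follows immediately from the presence of the orthogonal idempotents as described above. The rest is a straightforward geometric-series computation, and the uniform statement $\zeta_0(s) = V(s)^{-1}$ is then read off from the definition of $V(s)$ given in Section \ref{sectionzetafunctionsoforders}.
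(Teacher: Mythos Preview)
Your proof is correct. You compute $\zeta_0(s)$ by directly enumerating the finite-index ideals of $\cO_L$: in the nonsplit case via the DVR structure, and in the split case via the idempotent decomposition $I = I_1 \times I_2$. Both summations are unimpeachable.

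The paper takes a different route. Rather than enumerating ideals from scratch, it observes that for $n = 0$ every ideal of $\cO_0$ is principal and, since the threshold $t_0$ equals $0$, every principal ideal is a \emph{high} ideal. It then invokes the high-ideal contribution already computed in the proof of Proposition~\ref{recurrenceshape}, namely $[\cO_0^*:\cO_n^*]\,q^{-2ns}V(s)^{-1}$, which for $n=0$ collapses to $V(s)^{-1}$. So the paper's argument is a one-line specialization of machinery it has just built, whereas yours is a self-contained computation that does not depend on the low/high dichotomy or the principal-part analysis at all. Your approach is more elementary and would stand on its own outside the paper's framework; the paper's approach has the virtue of consistency with its overall narrative and avoids redoing a calculation that is implicitly contained in the high-type count of Proposition~\ref{highequivariant}.
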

\begin{proof}
We know that for $n = 0$ all ideals are high. We have computed before, in the proof of proposition \ref{recurrenceshape} on page \pageref{recurrenceshape}, that the high ideals contribute
$\dfrac{[\cO_{0}^*:\cO_n^*]}{q^{2ns}V(s)}$. In our present case this means
\begin{equation*}
    \zeta_0(s) = \zeta_0^P(s) = \dfrac{1}{V(s)},
\end{equation*}
and $V(s)$ is $1 - q^{-fs}$ for the nonsplit case and $(1 - q^{-s})^2$ for the split one.
\end{proof}

We finally get
\begin{theorem}\label{solutionrecurrence}
For each $n\ge 0$ define the following polynomials:
\begin{align*}
        R_n(X) &= 1 + qX^2 + q^2X^4 + ... + q^nX^{2n},
\end{align*}
and for $n\ge 1$ define
\begin{align*}
    U_n(X) &= (1 + X)R_{n - 1}(X) + q^nX^{2n},\\
    S_n(X) &= (1 - X)R_{n - 1}(X) + q^nX^{2n}.
\end{align*}
Finally, also put $U_0(X) = S_0(X) = 1$. Explicitly, these polynomials are
\begin{align*}
    R_n(X) &= 1 + qX^2 + q^2X^4 + ... + q^nX^{2n},\\
    U_n(X) &= 1 + X + qX^2 + qX^3 + ... + q^{n - 1}X^{2n - 2} + q^{n - 1}X^{2n - 1} + q^nX^{2n},\\
    S_n(X) &= 1 - X + qX^2 - qX^3 + ... + q^{n - 1}X^{2n - 2} - q^{n - 1}X^{2n - 1} + q^nX^{2n}.
\end{align*}Then the solution of the ramified, unramified and split case recurrence, respectively, satisfy
\begin{align*}
    (1 - q^{-s})\zeta_n(s) &= R_n(q^{-s}),\\
    (1 - q^{-2s})\zeta_n(s) &= U_n(q^{-s}),\\
    (1 - q^{-s})^2\zeta_n(s) &= S_n(q^{-s}).
\end{align*}
\end{theorem}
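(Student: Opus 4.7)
The plan is to prove all three identities simultaneously by induction on $n$, using Theorem \ref{recurrencestatement}, Proposition \ref{recurrenceshape}, and Proposition \ref{valuesindices}. Set $X = q^{-s}$ throughout, and let $V(s)$ denote the denominator in each case: $V(s) = 1 - X$ in the ramified case, $V(s) = 1 - X^2$ in the unramified case, and $V(s) = (1 - X)^2$ in the split case. The base case $n = 0$ follows from the initial condition $\zeta_0(s) = V(s)^{-1}$ together with $R_0 = U_0 = S_0 = 1$.

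The first key observation is that, in every one of the three cases, Proposition \ref{valuesindices} yields the uniform simplification
\[
    \frac{[\cO_0^*:\cO_n^*]}{[\cO_0^*:\cO_{n-d}^*]} = q^d \quad \text{for } 0 \le d \le n-1.
\]
Substituting this into the formula from Proposition \ref{recurrenceshape}, the low-ideal sum collapses to $R_{n-1}(X) = \sum_{d=0}^{n-1} q^d X^{2d}$, giving
\[
    \zeta_n^P(s) = R_{n-1}(X) + \frac{[\cO_0^*:\cO_n^*]\, X^{2n}}{V(s)}.
\]
Multiplying the recurrence $\zeta_n(s) = \zeta_n^P(s) + X \zeta_{n-1}(s)$ by $V(s)$ and applying the inductive hypothesis to $V(s)\zeta_{n-1}(s)$, the problem reduces to verifying the polynomial identity
\[
    P_n(X) \;=\; V(s)\, R_{n-1}(X) + [\cO_0^*:\cO_n^*]\, X^{2n} + X\, P_{n-1}(X),
\]
where $P_n$ stands for $R_n$, $U_n$, or $S_n$ according to the case.

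Each of the three identities is then a routine algebraic verification. For the ramified case, $V(s) = 1 - X$ and $[\cO_0^*:\cO_n^*] = q^n$, so the right-hand side becomes $(1-X)R_{n-1}(X) + q^n X^{2n} + X R_{n-1}(X) = R_{n-1}(X) + q^nX^{2n} = R_n(X)$ by the definition of $R_n$. For the unramified and split cases, one uses $R_{n-1}(X) = R_{n-2}(X) + q^{n-1}X^{2n-2}$ to split $V(s)R_{n-1}(X)$, then collects the terms in the ansatz $(1 \pm X)R_{n-2}(X) + q^{n-1}X^{2n-2} \pm q^{n-1}X^{2n-1} + q^n X^{2n}$, which matches $U_n$ (resp.\ $S_n$) by the formulas defining these polynomials. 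The algebra is elementary; the bookkeeping is what needs attention.

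I expect no genuine obstacle here: the hard work was done in Proposition \ref{recurrenceshape} and Proposition \ref{valuesindices}. The only mildly delicate point is noticing the miraculous cancellation $[\cO_0^*:\cO_n^*]/[\cO_0^*:\cO_{n-d}^*] = q^d$ uniformly across the three cases, which is what makes the low-ideal contribution identical (namely $R_{n-1}(X)$) in all three settings and explains why the recurrence structures of $R_n$, $U_n$, $S_n$ differ only through the initial condition encoded in $V(s)$ and the leading coefficient $[\cO_0^*:\cO_n^*]$. Finally, the explicit expansions of $R_n$, $U_n$, $S_n$ stated in the theorem follow directly from their recursive definitions by inspection.
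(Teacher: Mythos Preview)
Your proposal is correct and follows exactly the approach the paper takes: induction on $n$ using the recurrence of Theorem~\ref{recurrencestatement}, the expression for $\zeta_n^P$ from Proposition~\ref{recurrenceshape}, the unit indices from Proposition~\ref{valuesindices}, and the initial condition $\zeta_0 = V(s)^{-1}$. The paper's own proof simply reads ``The three cases are solved by induction using as base case the initial condition given in the previous proposition. We omit the details as they are straightforward,'' so your sketch is in fact more detailed than what appears in the paper, and in particular your observation that $[\cO_0^*:\cO_n^*]/[\cO_0^*:\cO_{n-d}^*] = q^d$ uniformly is precisely the cancellation that makes the induction go through.
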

\begin{proof}
The three cases are solved by induction using as base case the initial condition given in the previous proposition. We omit the details as they are straightforward. 
\end{proof}

\bibliographystyle{acm}
\bibliography{bibliography}

\begin{thebibliography}{10}

\bibitem{ARTHUR2018425}
{\sc Arthur, J.}
\newblock A stratification related to characteristic polynomials.
\newblock {\em Advances in Mathematics 327\/} (2018), 425--469.

\bibitem{Reiner1}
{\sc Bushnell, C.~J., and Reiner, I.}
\newblock Zeta functions of arithmetic orders and solomon's conjecture.
\newblock {\em Mathematische Zeitschrift 173\/} (1980), 135--162.

\bibitem{Reiner3}
{\sc Bushnell, C.~J., and Reiner, I.}
\newblock Functional equations for l-functions of arithmetic orders.
\newblock 88--124.

\bibitem{Reiner2}
{\sc Bushnell, C.~J., and Reiner, I.}
\newblock L-functions of arithmetic orders and asymptotic distribution of ideals.
\newblock 156--183.

\bibitem{malors21}
{\sc Espinosa, M.}
\newblock Multiplicative formula of {L}anglands for orbital integrals in ${GL}(2)$.
\newblock {\em submitted\/} (2022).

\bibitem{ibukiyama2023genus}
{\sc Ibukiyama, T.}
\newblock Genus character $l$-functions of quadratic orders in an adelic way and maximal orders of matrix algebras, 2023.

\bibitem{Kaneko2003}
{\sc Kaneko, M.}
\newblock On the local factor of the zeta function of quadratic orders.
\newblock In {\em Zeta Functions, Topology and Quantum Physics\/} (Boston, MA, 2005), T.~Aoki, S.~Kanemitsu, M.~Nakahara, and Y.~Ohno, Eds., Springer US, pp.~75--79.

\bibitem{KMizuno}
{\sc Kaneko, M., and Mizuno, Y.}
\newblock Genus character l-functions of quadratic orders and class numbers.
\newblock {\em Journal of the London Mathematical Society 102}, 1 (2020), 69--98.

\bibitem{LanBE04}
{\sc Langlands, R.~P.}
\newblock Beyond endoscopy.
\newblock In {\em Contributions to automorphic forms, geometry, and number theory}. Johns Hopkins Univ. Press, Baltimore, MD, 2004, pp.~611--697.

\bibitem{Neu99}
{\sc Neukirch, J.}
\newblock {\em Algebraic number theory}, vol.~322 of {\em Grundlehren der Mathematischen Wissenschaften [Fundamental Principles of Mathematical Sciences]}.
\newblock Springer-Verlag, Berlin, 1999.

\bibitem{Promode}
{\sc Saikia, P.~K.}
\newblock Zeta functions of orders on quadratic fields.
\newblock {\em Proceedings of the Indian Academy of Sciences - Mathematical Sciences 98 1\/} (1988), 31--42.

\bibitem{Sands1991}
{\sc Sands, J.}
\newblock Generalization of a theorem of siegel.
\newblock {\em Acta Arithmetica 58}, 1 (1991), 47--57.

\bibitem{Solomon1977ZetaFA}
{\sc Solomon, L.}
\newblock Zeta functions and integral representation theory.
\newblock {\em Advances in Mathematics 26\/} (1977), 306--326.

\bibitem{ZYun}
{\sc Yun, Z.}
\newblock Orbital integrals and dedekind zeta functions.
\newblock {\em Ramanujan Mathematical Society Lecture Notes 20\/} (2013), 399 -- 420.

\bibitem{Zagier}
{\sc Zagier, D.}
\newblock Modular forms whose fourier coefficients involve zeta-functions of quadratic fields.
\newblock In {\em Modular Functions of One Variable VI\/} (Berlin, Heidelberg, 1977), J.-P. Serre and D.~B. Zagier, Eds., Springer Berlin Heidelberg, pp.~105--169.

\end{thebibliography}

\end{document}